\numberwithin{equation}{section}
\newtheorem{lem}{Lemma}[section]
\newtheorem{thm}[lem]{Theorem}
\newtheorem{cor}[lem]{Corollary}
\newtheorem{claim}{Claim}[section]
\newtheorem*{rem}{Remark}
\begin{document}

\title{Triangle-free Graphs with Large Minimum Common Degree}
\author{Jian Wang\footnote{Department of Mathematics, Taiyuan University of Technology, Taiyuan 030024, P. R. China. E-mail:wangjian01@tyut.edu.cn.}\quad\quad
Weihua Yang\footnote{Department of Mathematics, Taiyuan University of Technology, Taiyuan 030024, P. R. China. E-mail:yangweihua@tyut.edu.cn. Research supported by NSFC No. 12371356. }\quad\quad
Fan Zhao\footnote{Department of Mathematics, Taiyuan University of Technology, Taiyuan 030024, P. R. China. E-mail:zhaofan19990804@163.com.}
\\
}
\date{}
\maketitle

\begin{abstract}
Let $G$ be a graph. For $x\in V(G)$, let $N(x)=\{y\in V(G)\colon xy\in E(G)\}$.  The {\it minimum common degree} of $G$, denoted by $\delta_{2}(G)$, is defined as the minimum  of $|N(x)\cap N(y)|$ over all non-edges $xy$ of $G$. In 1982, H\"{a}ggkvist showed that every triangle-free graph with minimum degree greater than $\lfloor\frac{3n}{8}\rfloor$ is homomorphic to a cycle of length 5. In this paper, we prove that every  triangle-free graph with minimum common degree greater than $\lfloor\frac{n}{8}\rfloor$ is homomorphic to a cycle of length 5, which implies H\"{a}ggkvist's result. The balanced blow-up of the M\"{o}bius ladder graph shows that it is best possible.
\end{abstract}

\noindent{\bf Keywords:} triangle-free; minimum common degree;  the blow-up of $C_5$.

\section{Introduction}

Let $H$ be a fixed graph. We say that a graph $G$ is {\it $H$-free} if it does not contain $H$ as a subgraph. The {\it Tur\'{a}n number} ${\rm ex}(n,H)$ is defined as the maximum number of edges in an $H$-free graph on $n$ vertices. Let $T_r(n)$ be a complete $r$-partite graph with partite classes of sizes $\lfloor\frac{n}{r}\rfloor$ or $\lceil\frac{n}{r}\rceil$.

In 1907, Mantel determined ${\rm ex}(n,K_3)$, which is the starting point of the extremal graph theory.

\begin{thm}[\cite{mantel}]
\[
{\rm ex}(n,K_3) = \left\lfloor\frac{n^2}{4}\right\rfloor.
\]
Moreover, $T_2(n)$ is the unique triangle-free graph attaining the maximum number of edges.
\end{thm}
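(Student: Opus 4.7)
The plan is to prove the edge bound via a neighborhood argument and then read off uniqueness from the equality cases. I pick a vertex $u$ of maximum degree $\Delta$ in $G$. Because $G$ is $K_3$-free, the set $N(u)$ is independent, so every edge of $G$ has at least one endpoint in $T := V(G) \setminus N(u)$.

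Double counting the edge incidences from $T$ then gives
\[
e(G) \;\leq\; \sum_{v \in T} d(v) \;\leq\; |T|\,\Delta \;=\; (n-\Delta)\,\Delta.
\]
As a function of the integer $\Delta \in \{0,1,\ldots,n\}$, the right-hand side is maximized at $\Delta = \lfloor n/2 \rfloor$ or $\lceil n/2 \rceil$, and in either case attains value exactly $\lfloor n^2/4 \rfloor$. This yields the desired upper bound.

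For the uniqueness statement, I trace equality through each inequality. Equality in $e(G) \leq \sum_{v \in T} d(v)$ forces every edge to have exactly one endpoint in $T$, so $T$ is independent; equality in $d(v) \leq \Delta$ for $v \in T$ forces every vertex of $T$ to have degree exactly $\Delta$; and equality $(n-\Delta)\Delta = \lfloor n^2/4 \rfloor$ forces $\Delta \in \{\lfloor n/2 \rfloor, \lceil n/2 \rceil\}$. Since both $N(u)$ and $T$ are independent, $G$ is bipartite with parts $N(u)$ and $T$; and since each vertex of $T$ has all $\Delta = |N(u)|$ of its neighbors in $N(u)$, the graph $G$ is the complete bipartite graph $K_{\lfloor n/2 \rfloor,\lceil n/2 \rceil} = T_2(n)$.

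The argument is short and essentially free of obstacles; the only mild points requiring care are to verify that the integer maximum of $(n-\Delta)\Delta$ on $\{0,\ldots,n\}$ is exactly $\lfloor n^2/4 \rfloor$ and that the two possible extremal values of $\Delta$ describe the same balanced bipartite graph.
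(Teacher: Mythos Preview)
Your argument is correct: the neighborhood/double-counting bound $e(G)\le (n-\Delta)\Delta\le\lfloor n^2/4\rfloor$ is the standard elementary proof, and your equality analysis cleanly pins down $G=K_{\lfloor n/2\rfloor,\lceil n/2\rceil}$.

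The paper, however, does not give its own proof of Mantel's theorem. It is stated as a cited classical result, and the only argument offered is the remark following Theorem~\ref{AES}: if $\delta(G)>2n/5$ then $G$ is bipartite (hence $e(G)\le\lfloor n^2/4\rfloor$), and otherwise one removes a vertex of degree at most $2n/5$ and inducts. So your route is a direct, self-contained counting argument that also yields the uniqueness statement in one pass, whereas the paper's sketch relies on the much deeper Andr\'asfai--Erd\H{o}s--S\'os theorem plus an induction, and that sketch does not by itself recover the uniqueness of $T_2(n)$. Each has its appeal: yours is elementary and complete for this statement, while the paper's remark is there to illustrate how the minimum-degree structural results it studies sit above the basic extremal bound.
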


In 1943, Tur\'{a}n \cite{turan} proved that ${\rm ex}(n,K_{r+1}) =e(T_r(n))$. Since then the study of Tur\'{a}n number became a central topic in extremal graph theory. We refer to \cite{sidorenko,keevash} for surveys on Tur\'{a}n problems for graphs and hypergraphs.

Let $\delta(G)$ denote the minimum degree of $G$. In 1974, Andr\'{a}sfai, Erd\"{o}s and  S\'{o}s \cite{aes} proved the following result.

\begin{thm}[\cite{aes}]\label{AES}
If $G$ is a triangle-free graph on $n$ vertices with $\delta(G)>\frac{2n}{5}$, then $G$ is bipartite.
\end{thm}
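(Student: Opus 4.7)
The plan is to argue by contradiction: suppose $G$ is triangle-free with $\delta(G) > 2n/5$ yet contains an odd cycle, and derive a contradiction by bounding edges around a shortest odd cycle.

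First I would take a shortest odd cycle $C = v_1 v_2 \cdots v_{2k+1}$ in $G$; since $G$ is triangle-free we have $2k+1 \geq 5$, i.e.\ $k \geq 2$. A preparatory step is to show that $C$ is an induced subgraph. Any chord $v_i v_j$ with cyclic distance $d = \min(|i-j|, 2k+1-|i-j|)$ splits $C$ into two cycles of lengths $d+1$ and $2k+2-d$, whose parities differ; when $d=2$ the short cycle is a triangle, and when $d \geq 3$ the odd one of the two has length strictly less than $2k+1$. Either way we contradict either triangle-freeness or the minimality of $C$, so $C$ has no chords.

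Next I would show that every vertex $u \notin V(C)$ satisfies $|N(u)\cap V(C)| \leq 2$. Given two neighbors $v_i, v_j$ of $u$ on $C$ at cyclic distance $d$, the two closed walks obtained by replacing one of the two arcs $v_i \cdots v_j$ on $C$ with the path $v_i u v_j$ have lengths $d+2$ and $2k+3-d$, which again differ in parity. Triangle-freeness rules out $d=1$, and minimality of $C$ together with $k \geq 2$ forces $d = 2$; but no three vertices of $C_{2k+1}$ are pairwise at cyclic distance exactly $2$, so $u$ has at most two neighbors on $C$.

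Finally I would double-count edges incident to $V(C)$. Since $C$ is induced, $2 e(G[V(C)]) = 2(2k+1)$, and by the previous step $e(V(C), V\setminus V(C)) \leq 2(n-(2k+1))$. Therefore
\[
\sum_{v \in V(C)} d_G(v) \;\leq\; 2(2k+1) + 2(n-2k-1) \;=\; 2n.
\]
On the other hand the minimum degree hypothesis gives $\sum_{v \in V(C)} d_G(v) > (2k+1)\cdot \tfrac{2n}{5} \geq 2n$, a contradiction. Hence $G$ contains no odd cycle and is bipartite.

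The only substantive step is the distance-2 analysis for external vertices; once that is in hand, the global double count is immediate and tight (with equality dictating the $C_5$-blow-up structure that appears elsewhere in the paper). I expect the delicate point will be keeping the parity bookkeeping straight for the two cases $d$ even vs.\ $d$ odd, but this is routine case analysis rather than a genuine obstacle.
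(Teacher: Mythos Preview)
Your argument is correct and is essentially the classical Andr\'asfai--Erd\H{o}s--S\'os proof. The paper, however, does not prove Theorem~\ref{AES} directly: it deduces it as a corollary of its own main result. Concretely, Lemma~\ref{lem-1} (applied with $\alpha=\tfrac{1}{15}$) converts the hypothesis $\delta(G)>\tfrac{2n}{5}$ into $\delta_2(G')>\tfrac{n}{5}$ for a maximal triangle-free supergraph $G'$, and then Theorem~\ref{main}(i) forces $G'$ (hence $G$) to be bipartite. The proof of Theorem~\ref{main}(i) is close in spirit to what you do---it locates a $C_5$ and exhibits the five pairwise disjoint common neighbourhoods $D_i=N(v_{i-1})\cap N(v_{i+1})$, each of size exceeding $\lfloor n/5\rfloor$---but the counting is driven by $\delta_2$ rather than by $\delta$. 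Your route is more self-contained and handles shortest odd cycles of arbitrary length uniformly via the edge double count; the paper's route is less direct for this particular statement but is the point of the paper, namely that the common-degree hypothesis is strictly weaker than the minimum-degree one while yielding the same conclusion.
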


Note that Theorem \ref{AES} implies Mantel's theorem. Indeed, if $G$ is bipartite then $e(G)\leq\lfloor\frac{n^2}{4}\rfloor$ follows. Otherwise there is a vertex with degree at most $\frac{2n}{5}$ in $G$ and then Mantel's theorem follows from an induction argument on $n$.

Let $F$ be a graph on $k$ vertices. An {\it $F$-blow-up} is a graph obtained from $F$ by replacing each vertex with an independent set and replacing each edge by a complete bipartite graph. We say an $F$-blow-up on $n$ vertices is {\it balanced} if each vertex of $F$  is replaced by an independent set of size $\lfloor n/k\rfloor$ or $\lceil n/k\rceil$. A balanced $C_5$-blow-up on $n$ vertices shows that Theorem \ref{AES} is sharp (as shown in Figure 1 (a)).

\begin{figure}[thbp!]
	\centering
	\begin{minipage}[t]{0.4\linewidth}
		\centering
		\includegraphics[width=0.5\linewidth]{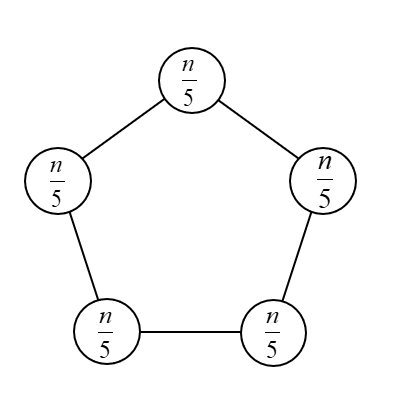}\\(a)\\
	\end{minipage}
	\begin{minipage}[t]{0.4\linewidth}
		\centering
		\includegraphics[width=0.5\linewidth]{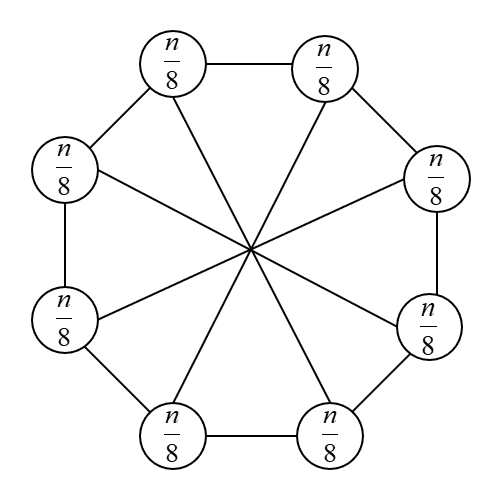}\\(b)\\
	\end{minipage}
	\caption{(a) A balanced $C_5$-blow-up (b) A balanced $H_M$-blow-up}
\end{figure}

Let $G, H$ be graphs. We say that {\it $G$ is homomorphic to $H$} if there is a map $\sigma$ from $V(G)$ to $V(H)$ such that $\sigma(u)\sigma(v)\in E(H)$ for every $uv\in E(G)$. Note that if $G$ is a subgraph of an $H$-blow-up then $G$ is homomorphic to $H$.

In 1982, H\"{a}ggkvist \cite{hagg} extended the Andr\'{a}sfai-Erd\H{o}s-S\'{o}s Theorem to the following form.

\begin{thm}[\cite{hagg}]\label{hagg}
If $G$ is a triangle-free graph on $n$ vertices with $\delta(G)>\lfloor\frac{3n}{8}\rfloor$, then $G$ is homomorphic to $C_5$.
\end{thm}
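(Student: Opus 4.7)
The plan is to build an explicit homomorphism $\sigma:V(G)\to V(C_5)=\mathbb{Z}_5$ by choosing a reference 5-cycle in $G$ and classifying every vertex by its adjacency pattern to that cycle.

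First, by Theorem~\ref{AES}, if $\delta(G)>2n/5$ then $G$ is bipartite and admits a homomorphism to $K_2\subseteq C_5$; so one may assume $3n/8<\delta(G)\leq 2n/5$. In this range $G$ is non-bipartite, and being triangle-free its shortest odd cycle has length at least $5$. A neighborhood-counting argument rules out odd girth $\geq 7$ when $\delta>3n/8$, so $G$ contains a 5-cycle $C=v_0v_1v_2v_3v_4$. For every $x\in V(G)$ set $I(x):=\{i\in\mathbb{Z}_5:xv_i\in E(G)\}$. Triangle-freeness forces $I(x)$ to be an independent set in $C_5$, so $|I(x)|\leq 2$, and $|I(x)|=2$ implies $I(x)=\{i{-}1,i{+}1\}$ for a unique $i$. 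Define provisional classes $V_i:=\{x:I(x)=\{i{-}1,i{+}1\}\}$ (so $v_i\in V_i$) and set $\sigma(x)=i$ for $x\in V_i$. A double count of pairs $(x,i)$ with $xv_i\in E(G)$ gives $\sum_x|I(x)|=\sum_i|N(v_i)|>15n/8$, and since $|I(x)|\leq 2$, at least $7n/8$ of the vertices of $G$ are placed by this first pass.

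The remaining fewer than $n/8$ vertices have $|I(x)|\leq 1$; each must be assigned some $\sigma(x)=i$ so that $N(x)\subseteq V_{i-1}\cup V_{i+1}$. Using $\delta(G)>3n/8$ one would show that there is always a unique such $i$: the neighborhood analysis uses triangle-freeness (any $y\in N(x)\cap V_j$ forces $\{v_{j-1},v_{j+1}\}\cap N(x)=\emptyset$, to avoid a triangle $x,y,v_{j\pm 1}$) together with the density condition to rule out neighbors in three or more classes. The verification that the resulting $\sigma$ is a homomorphism then reduces to showing that for every edge $xy\in E(G)$ the values $\sigma(x),\sigma(y)$ are consecutive on $C_5$: equality $\sigma(x)=\sigma(y)=i$ with $xy\in E$ would produce the triangle $x,y,v_{i-1}$ (both being adjacent to $v_{i-1}$ for classified $x,y$), and $\sigma(x)=i,\sigma(y)=i+2$ would yield $x,y,v_{i+1}$; the unclassified cases reduce to the placement criterion just established.

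The main obstacle is the extension step. The threshold $3n/8$ must be used tightly, since the balanced blow-up of the Möbius ladder $M_4$ has $\delta=3n/8$ exactly and is not homomorphic to $C_5$. The delicate work is quantitative: one needs sharp lower bounds on each $|V_i|$ (each class must be forced to be a substantial fraction of $V(G)$), and one must leverage the slack above $3n/8$ to show that every unclassified $x$ sees only two consecutive classes. It is precisely at this quantitative level that any weaker degree bound breaks, because the $M_4$-blow-up realises the borderline configuration.
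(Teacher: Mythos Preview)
Your plan differs from the paper's derivation. The paper does not prove Theorem~\ref{hagg} directly; it obtains it as a corollary of the main result: pass to a maximal triangle-free supergraph, apply Lemma~\ref{lem-1} with $\alpha=\tfrac{1}{24}$ to convert $\delta(G)>3n/8$ into $\delta_2>n/8$, and then invoke Theorem~\ref{main}~(ii). Your outline --- fix a $5$-cycle, classify vertices by adjacency pattern to it, then extend --- is in fact close to how the paper proves Theorem~\ref{main}~(ii) itself (its sets $D_i$ are exactly your $V_i$), except that you work with $\delta$ rather than $\delta_2$. So the route is different, but the underlying geometry is the same; what the paper buys by going through $\delta_2$ is a strictly stronger theorem from which H\"aggkvist drops out in one line.

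There is, however, a real gap in your extension step. The placement rule you state for an unclassified vertex $x$, namely $N(x)\subseteq V_{i-1}\cup V_{i+1}$, is too strong: $x$ may have neighbours among the other unclassified vertices, and those lie in no $V_j$, so no $i$ would ever qualify. The workable criterion (cf.\ the sets $W_i$ in the paper's proof of Theorem~\ref{main}~(ii)) is that, restricted to $\bigcup_j V_j$, the neighbourhood of $x$ meets $V_{i-1}$ and $V_{i+1}$ and no other $V_j$. Showing that every leftover $x$ satisfies this for a unique $i$, and then that the extended colouring has no edge inside a class or between classes at distance $2$, is precisely where the threshold bites and where all the work lies --- in the paper this is Claims~\ref{claim-1}--\ref{claim-4}, each of which manufactures eight pairwise disjoint blocks of size $>\lfloor n/8\rfloor$ to force a contradiction. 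Your double count $\sum_x|I(x)|>15n/8$ is only the soft preliminary; since the hard part is deferred to ``one would show'', the proposal is an outline rather than a proof. (A minor slip: ``in this range $G$ is non-bipartite'' is backwards --- you should assume $G$ non-bipartite first, and only then does Theorem~\ref{AES} cap $\delta$ at $2n/5$.)
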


Define the  {\it M\"{o}bius ladder} $H_M$ to be the graph obtained from a cycle of length 8 by adding 4 chords joining vertices of distance $4$ on the cycle. A balanced $H_M$-blow-up on $n$ vertices shows that Theorem \ref{hagg} is sharp (as shown in Figure 1 (b)).

The {\it minimum common degree} of $G$, denoted by $\delta_{2}(G)$, is defined as the minimum of $|N(x)\cap N(y)|$ over all non-edges $xy$ of $G$. We say that $G$ is {\it maximal triangle-free} if $G$ is triangle-free and any addition of an extra edge to $G$ creates a triangle. Recently, Liu, Shangguan, Skokan and Xu \cite{LSSX} proved that every maximal triangle-free graph with $\delta_{2}(G)\geq\varepsilon n$ is a blow-up of some triangle-free graph with at most $2^{O(\frac{3}{\varepsilon}\log\frac{1}{\varepsilon})}$ vertices.

Motivated by  H\"{a}ggkvist's result and Liu, Shangguan, Skokan and Xu's result,  we consider triangle-free graphs with large minimum common degree.  Our main result is the following.

\begin{thm}\label{main}
Let $G$ be a triangle-free graph on $n$ vertices. Then (i) and (ii) hold.
\begin{itemize}
  \item[(i)]If $\delta_{2}(G)>\lfloor\frac{n}{5}\rfloor$ and $n\geq 5$, then $G$ is bipartite.
  \item[(ii)]If  $\delta_{2}(G)>\lfloor\frac{n}{8}\rfloor$ and $n\geq 8$, then $G$ is homomorphic to $C_5$.
\end{itemize}
\end{thm}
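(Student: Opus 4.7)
My plan is to prove both parts by contradiction, starting from a shortest odd cycle of $G$.

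For (i), suppose $G$ is triangle-free with $\delta_{2}(G) > \lfloor n/5 \rfloor$ and is not bipartite. Let $C$ be a shortest odd cycle of $G$; standard arguments show $C$ is induced. I first claim $|C| = 5$: if $|C| = 2k+1 \geq 7$, the non-edge $\{v_{1},v_{4}\}$ shares no common neighbor on $C$, so the $\delta_{2}$-hypothesis supplies a common neighbor $u \notin V(C)$, and then $uv_{1}v_{2}v_{3}v_{4}u$ is a $C_{5}$, contradicting the minimality of $C$. Writing $C = v_{1}v_{2}v_{3}v_{4}v_{5}$ (indices mod $5$), set $T_{i} = N(v_{i}) \cap N(v_{i+2})$. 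Since $C$ is induced, $T_{i} \cap V(C) = \{v_{i+1}\}$, giving $|T_{i} \setminus V(C)| \geq \delta_{2}(G) - 1 \geq \lfloor n/5 \rfloor$. Triangle-freeness forces every $w \in T_{i} \setminus V(C)$ to satisfy $N(w) \cap V(C) = \{v_{i}, v_{i+2}\}$, and since this pair determines $i$, the five sets $T_{i} \setminus V(C)$ are pairwise disjoint. Counting vertices yields $n \geq 5 + 5\lfloor n/5 \rfloor > n$, a contradiction.

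For (ii), suppose $G$ is triangle-free with $\delta_{2}(G) > \lfloor n/8 \rfloor$ and not homomorphic to $C_{5}$. Since bipartite graphs map into an edge of $C_{5}$, $G$ must contain an odd cycle, and the same argument as in (i) forces the shortest to be a $C_{5}$, say $C = v_{1}v_{2}v_{3}v_{4}v_{5}$. Triangle-freeness forces $N(v) \cap V(C)$ to be independent in $C_{5}$ for every $v \notin V(C)$, hence empty, a single vertex $\{v_{i}\}$, or a distance-two pair $\{v_{i}, v_{i+2}\}$. This suggests the preliminary partition
\[
A_{i}^{*} = \{v_{i}\} \cup \{v : N(v) \cap V(C) = \{v_{i-1}, v_{i+1}\}\},\quad B_{i} = \{v : N(v) \cap V(C) = \{v_{i}\}\},\quad D = \{v : N(v) \cap V(C) = \emptyset\}.
\]
A routine triangle-free argument shows each $A_{i}^{*}$ is independent and that there are no edges between $A_{i}^{*}$ and $A_{i+2}^{*}$, so the $A_{i}^{*}$'s already form a partial $C_{5}$-blow-up.

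The main task is to redistribute the vertices of $B_{1}, \dots, B_{5}$ and $D$ into the $A_{j}^{*}$'s while preserving this blow-up structure. For $v \in B_{i}$ my plan is to apply $\delta_{2}(G) > \lfloor n/8 \rfloor$ to the non-edges $\{v, v_{i+2}\}$ and $\{v, v_{i-2}\}$: the resulting large common neighborhoods force $v$ to be consistently placed in either $A_{i-1}$ or $A_{i+1}$, and an analogous argument handles $D$. I anticipate the main obstacle to be an obstructing vertex $v$ that admits no consistent placement; the strategy there is to exploit its forbidden edges to both sides, together with iterated applications of $\delta_{2}$ to suitable non-edges, to extract eight pairwise disjoint subsets of $V(G)$ mirroring the eight parts of an $H_{M}$-blow-up, each of size at least $\lfloor n/8 \rfloor + 1$, which sum to more than $n$. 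Because the $H_{M}$-blow-up saturates the bound $n/8$, the counting has no slack, so the crux will be a delicate case analysis certifying that every forbidden configuration uniquely produces its associated disjoint set without double-counting.
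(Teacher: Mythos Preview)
Your proof of (i) is correct and essentially identical to the paper's: reduce to an induced $C_5$, take the five common neighbourhoods $T_i=N(v_i)\cap N(v_{i+2})$, note they are pairwise disjoint by triangle-freeness, and count.

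For (ii) your setup agrees with the paper's (your $A_i^*$ is exactly its $D_i=N(v_{i-1})\cap N(v_{i+1})$), but the plan underestimates what remains. First, the effective way to sort the leftover vertices $W=V(G)\setminus\bigcup_i A_i^*$ is not by $N(v)\cap V(C)$ but by which of the large sets $A_j^*$ meet $N(v)$. The paper proves, each via its own eight-disjoint-sets contradiction, that every $x\in W$ has neighbours in \emph{at least two} of the $A_j^*$ and never in two \emph{consecutive} $A_j^*$; this forces $x$ into a unique class $W_i$ (neighbours in $A_{i-1}^*$ and $A_{i+1}^*$ only). Your classes $B_i$ and $D$ do not see this directly, and ``an analogous argument handles $D$'' glosses over the first of these claims, which is where a vertex with no neighbour on $C$ is actually dealt with.

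Second, and more seriously, you frame the only obstruction as a single vertex with no consistent placement. But once every vertex lands in some $W_i$, you still must verify that $(A_1^*\cup W_1,\dots,A_5^*\cup W_5)$ is a genuine $C_5$-blow-up: each $W_i$ must be independent, and there must be no edge between $W_i$ and $W_{i+2}$. These are \emph{edge} obstructions, not vertex obstructions, and each requires its own tailored family of eight pairwise disjoint sets of size $>\lfloor n/8\rfloor$; the $W_i$--$W_{i+2}$ case alone splits into four subcases in the paper. This is the bulk of the argument and your outline does not anticipate it. So the overall strategy is right, but you should expect four or five distinct eight-set constructions, not one.
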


We need the following lemma proved in \cite{LSSX}. For self-containedness,  we include a proof of it.

\begin{lem}[\cite{LSSX}]\label{lem-1}
If $G$ is a maximal triangle-free graph on $n$ vertices with $\delta(G)>(\frac{1}{3}+\alpha)n$, then $\delta_{2}(G)>3\alpha n$.
\end{lem}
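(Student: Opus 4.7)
The plan is to exploit three vertices at once: $x$, $y$, and a common neighbor $z$ of $x$ and $y$, using inclusion--exclusion together with the triangle-free hypothesis applied three times.

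First, I would fix a non-edge $xy$ of $G$ and set $A = N(x)$, $B = N(y)$. The maximality of $G$ is used exactly once at the outset: since adding $xy$ must produce a triangle, $x$ and $y$ have at least one common neighbor, so $A \cap B \neq \emptyset$, and we may pick some $z \in A \cap B$. Once $z$ is in hand I stop needing maximality, and from this point on only the hypothesis that $G$ is triangle-free with $\delta(G) > (\tfrac{1}{3}+\alpha)n$ is used.

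Next I would argue that $N(z)$ must avoid both $A$ and $B$. Indeed, if $w \in N(z) \cap A$ then $x, z, w$ form a triangle (since $w \in N(x)$ and $wz \in E(G)$); the same reasoning with $y$ in place of $x$ shows $N(z) \cap B = \emptyset$. Hence
\[
N(z) \;\subseteq\; V(G) \setminus (A \cup B),
\]
which gives $|N(z)| \le n - |A \cup B| = n - |A| - |B| + |A \cap B|$. Rearranging,
\[
|A \cap B| \;\ge\; |A| + |B| + |N(z)| - n.
\]
Applying the minimum degree bound to each of the three vertices $x$, $y$, $z$ yields
\[
|N(x) \cap N(y)| = |A \cap B| \;>\; 3\!\left(\tfrac{1}{3}+\alpha\right)n - n \;=\; 3\alpha n,
\]
which is the desired inequality.

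There is no real obstacle: the only conceptual point is to recognize that the minimum degree hypothesis should be applied not just to $x$ and $y$ (which only gives the weak bound $|A|+|B|-n$ from $|A\cup B|\le n$) but also to a common neighbor $z$, whose neighborhood is forced by triangle-freeness to lie entirely outside $A \cup B$. The calculation is then a one-line inclusion--exclusion, and the factor of $3$ on the right-hand side comes precisely from summing three degree lower bounds of $(\tfrac{1}{3}+\alpha)n$.
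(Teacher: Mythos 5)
Your proof is correct and is essentially identical to the paper's: both use maximality only to produce a common neighbor $z$ of $x$ and $y$, then use triangle-freeness to force $N(z)$ to be disjoint from $N(x)\cup N(y)$, and conclude by summing the three degree bounds via inclusion--exclusion. No differences worth noting.
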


\noindent\emph{\textbf{Proof}.} Let $G$ be a maximal triangle-free graph on $n$ vertices with $\delta(G)>(\frac{1}{3}+\alpha)n$. Take $x,y\in V(G)$ arbitrarily such that $xy\notin E(G)$. Note that  $|N(x)|>(\frac{1}{3}+\alpha)n$ and $|N(y)|>(\frac{1}{3}+\alpha)n$. We claim that $N(x)\cap N(y)\neq\emptyset$. Indeed, otherwise $G+xy$ is also triangle-free, contradicting the maximality of $G$.

Let $w\in N(x)\cap N(y)$. Since $G$ is triangle-free, $w$ has no neighbor in $N(x)\cup N(y)$. Thus,
\begin{align}
n=|V(G)|&\geq |N(x)\cup N(y)| +|N(w)| \nonumber\\
&=|N(x)|+|N(y)|-|N(x)\cap N(y)|+|N(w)|\nonumber\\
&>3\cdot\left(\frac{1}{3}+\alpha\right)n-|N(x)\cap N(y)|.\nonumber
\end{align}
It follows that $|N(x)\cap N(y)|>3\alpha n$.
$\hfill\square$

Applying Lemma \ref{lem-1} with $\alpha=\frac{1}{15}$ and $\frac{1}{24}$,  we see that $\delta(G)>2n/5$ implies $\delta_2(G)>n/5$ and  $\delta(G)>3n/8$ implies $\delta_2(G)>n/8$. Thus, Theorem \ref{AES} follows from Theorem \ref{main} (i) and   Theorem \ref{hagg} follows from Theorem \ref{main} (ii).

Let $G_1$ be a $C_4$-blow-up with vertices of $C_4$ being replaced by independent sets of sizes $n/8$, $3n/8$, $n/8$ and $3n/8$ consecutively. Let $G_2$ be a $C_5$-blow-up with vertices of $C_5$ being replaced by independent sets of sizes $n/7$, $2n/7$, $n/7$, $n/7$ and $2n/7$ consecutively. It is easy to check that $\delta_2(G_1)=n/4>n/5$ and $\delta_2(G_2)=n/7>n/8$ but $\delta(G_1)=n/4<2n/5$ and $\delta(G_2)=2n/7<3n/8$ (as shown in Figure 2). Thus the condition $\delta_2(G)> 3\alpha n$ is strictly stronger than $\delta(G)>(\frac{1}{3}+\alpha)n$.

\begin{figure}[thbp!]
	\centering
	\begin{minipage}[t]{0.4\linewidth}
		\centering
		\includegraphics[width=0.6\linewidth]{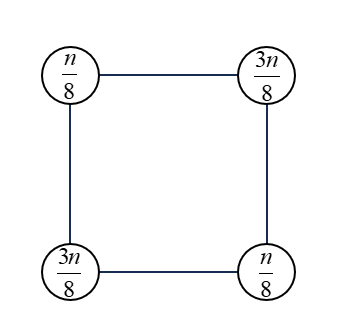}\\
		$G_1$\\
	\end{minipage}
	\begin{minipage}[t]{0.4\linewidth}
		\centering
		\includegraphics[width=0.6\linewidth]{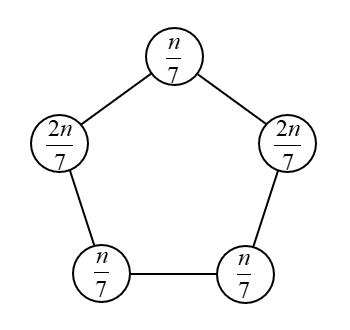}\\
		$G_2$\\
	\end{minipage}
	\caption{The graphs $G_1$ and $G_2$.}
\end{figure}

For $C_5$-free graphs, we prove the following result.

\begin{thm}\label{C5}
If $G$ is a $C_{5}$-free graph on $n$ vertices $(n\geq 5)$ with $\delta_{2}(G)\geq 3$, then $G$ is bipartite.
\end{thm}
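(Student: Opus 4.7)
My plan is a proof by contradiction. I would assume that $G$ is a $C_5$-free graph on $n\geq 5$ vertices with $\delta_2(G)\geq 3$ that is not bipartite, and produce a copy of $C_5$ in $G$. Letting $C=v_1v_2\cdots v_\ell v_1$ be a shortest odd cycle of $G$, the $C_5$-freeness of $G$ forces $\ell=3$ or $\ell\geq 7$, so the argument splits into these two cases.

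For $\ell\geq 7$ I would first argue that $C$ is induced: any chord $v_iv_j$ would split $C$ into two cycles whose lengths sum to $\ell+2$, one of which is odd and strictly shorter than $\ell$, contradicting the minimality of $C$. In particular $v_1v_4\notin E(G)$, and since the neighbors of $v_1$ on $C$ are $\{v_2,v_\ell\}$ while those of $v_4$ are $\{v_3,v_5\}$, no vertex of $C$ can be a common neighbor of $v_1$ and $v_4$. By $\delta_2(G)\geq 3$ we may therefore choose $a\in N(v_1)\cap N(v_4)$ with $a\notin V(C)$; then $v_1v_2v_3v_4av_1$ is a $C_5$, a contradiction.

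The main obstacle is $\ell=3$, when $G$ contains only a triangle $xyz$ and the shortest-odd-cycle trick no longer yields four consecutive vertices to exploit. Here I would fix $v\in V(G)\setminus\{x,y,z\}$ (available since $n\geq 5$) and split on $t:=|N(v)\cap\{x,y,z\}|$. When $t=0$, the sets $N(v)\cap N(x)$ and $N(v)\cap N(y)$ each have size at least $3$ and are contained in $V(G)\setminus\{v,x,y,z\}$, so one may pick distinct $a\in N(v)\cap N(x)$ and $b\in N(v)\cap N(y)$, producing the $C_5$ $vaxybv$. When $t=1$, say $v\sim x$, a vertex $a\in (N(v)\cap N(y))\setminus\{x\}$ exists and automatically satisfies $a\neq z$ because $z\notin N(v)$, yielding the $C_5$ $vayzxv$. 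When $t=2$, say $v\sim x,y$ and $v\not\sim z$, the set $N(v)\cap N(z)$ contains $x,y$ and has size at least $3$, so some $a\in N(v)\cap N(z)$ lies outside $\{x,y\}$, and $vazxyv$ is a $C_5$. Finally, if every $v\notin\{x,y,z\}$ falls into the case $t=3$, picking any two distinct such vertices $u,u'$ (which exist because $n\geq 5$) gives the $C_5$ $uxu'yzu$. The only bookkeeping left is to confirm in each subcase that the five vertices of the candidate cycle are pairwise distinct and that all five edges are present, which follows immediately from the adjacency pattern of $v$ with $\{x,y,z\}$ used to define the subcase; in every branch a $C_5$ appears, contradicting the $C_5$-freeness of $G$, so $G$ must be bipartite.
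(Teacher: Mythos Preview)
Your proof is correct and follows essentially the same approach as the paper. The paper packages the $\ell\geq 7$ case into its Lemma~2.1 and then, in the triangle case, merges your subcases $t=0$ and $t=1$ into a single argument (if $|N(v)\cap\{x,y,z\}|\leq 1$, relabel so that $v\not\sim x,y$ and find a $C_5$ through two common neighbors), but the underlying ideas and the constructions of the forbidden $C_5$'s are the same as yours.
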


\section{Some useful lemmas}
In this section, we prove some lemmas that are needed in the proofs.

\begin{lem}\label{lem-2}
If $G$ is a graph on $n$ vertices with $\delta_{2}(G)\geq 1$, then either $G$ is bipartite or the shortest odd cycle of $G$ has length  3 or 5.
\end{lem}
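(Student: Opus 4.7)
The plan is to prove the contrapositive: assume $G$ is not bipartite, let $C = v_1 v_2 \cdots v_{2k+1} v_1$ be a shortest odd cycle of $G$, and derive a contradiction from the assumption $k \geq 3$ (equivalently $|C| \geq 7$) together with $\delta_2(G) \geq 1$.

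The first step is the standard observation that a shortest odd cycle is chordless: any chord $v_i v_j$ would split $C$ into two strictly shorter cycles whose lengths sum to $(2k+1) + 2 = 2k+3$, an odd number, so one of them is odd and contradicts the minimality of $C$. Hence the only edges of $G$ among the $v_i$ are the consecutive ones, and in particular $v_1 v_5 \notin E(G)$ when $k \geq 3$. Applying $\delta_2(G) \geq 1$ to this non-edge, I pick $w \in N(v_1) \cap N(v_5)$. If $w$ were some $v_j$, chordlessness of $C$ would force $j \in \{2, 2k+1\} \cap \{4, 6\}$, which is empty once $k \geq 3$; so $w \notin V(C)$. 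But then $v_1\, w\, v_5\, v_6 \cdots v_{2k+1}\, v_1$ is a cycle of length $2 + (2k-4) + 1 = 2k - 1$, odd and strictly shorter than $C$, contradicting the choice of $C$.

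The main obstacle is choosing the right non-edge of $C$ to probe with $\delta_2$. The naive choice $v_1, v_3$ is useless: a common neighbor $w$ outside $C$ yields the cycle $v_1\, w\, v_3\, v_4 \cdots v_{2k+1}\, v_1$ of the same length $2k+1$ (essentially relabeling $v_2$ as $w$). Skipping one more vertex, to the pair $v_1, v_5$, is precisely what shortens the cycle by two. Once this choice is made, verifying that $w$ cannot lie on $C$ is immediate from chordlessness, and the remainder is a straightforward edge count.
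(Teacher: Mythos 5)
Your proof is correct and follows essentially the same approach as the paper: probe a non-edge of the chordless shortest odd cycle with $\delta_2(G)\geq 1$, show the common neighbor lies off the cycle, and produce a shorter odd cycle. The only (immaterial) difference is the choice of non-edge: the paper uses $v_1,v_4$ and closes up a $C_5$ through $v_2,v_3$, whereas you use $v_1,v_5$ and shortcut the long way around to get a $(2k-1)$-cycle; both yield the same contradiction.
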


\noindent\emph{\textbf{Proof}.}  Suppose that $G$ is non-bipartite. Let $C=v_1v_2\ldots v_\ell v_1$ be a shortest odd cycle of $G$. Note that $C$ is an induced cycle. If $\ell\geq 7$, then $N(v_{1})\cap N(v_{4})\neq\emptyset$. It follows that there is some $v\in N(v_{1})\cap N(v_{4})$. Since $C$ is chord-free, $v\in V(G)\setminus V(C)$. Then $vv_{1}v_{2}v_{3}v_{4}v$ is a $C_5$, contradicting the minimality of $\ell$. Thus the lemma is proven.
$\hfill\square$

\begin{cor}
If $G$ is a $\left\lbrace C_{3},C_{5}\right\rbrace$-free graph on $n$ vertices with $\delta_{2}(G)\geq 1$, then $G$ is bipartite.
\end{cor}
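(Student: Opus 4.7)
The corollary is essentially a one-line consequence of Lemma \ref{lem-2}, so the plan is simply to invoke that lemma and eliminate the two excluded cycle lengths using the hypothesis. First I would observe that the hypothesis $\delta_2(G) \geq 1$ places $G$ in the scope of Lemma \ref{lem-2}, so $G$ is either bipartite or contains an odd cycle whose shortest length is $3$ or $5$.

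Next, I would use the $\{C_3, C_5\}$-freeness to rule out the second alternative. A shortest odd cycle of length $3$ is precisely a triangle (a copy of $C_3$), and a shortest odd cycle of length $5$ is precisely a copy of $C_5$; either would contradict the assumption that $G$ contains neither as a subgraph. Hence only the bipartite alternative can occur, giving the conclusion.

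I do not expect any genuine obstacle: the statement is a direct corollary, recorded to make explicit that for graphs avoiding both of the possible short odd cycles allowed by Lemma \ref{lem-2}, bipartiteness follows immediately. The only mild subtlety worth mentioning in the write-up is to confirm that ``shortest odd cycle of length $\ell$'' in Lemma \ref{lem-2} indeed means $G$ contains $C_\ell$ as a subgraph, which is immediate from the definition used there (an induced cycle on $\ell$ vertices).
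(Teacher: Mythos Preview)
Your proposal is correct and matches the paper's approach: the corollary is stated immediately after Lemma~\ref{lem-2} without a separate proof, precisely because it is the one-line deduction you describe. Your remark about a shortest odd cycle of length $\ell$ giving a copy of $C_\ell$ is indeed immediate and requires no further justification.
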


 We need the following simple but useful lemma.

\begin{lem}\label{lem-3}
Suppose  that $G$ is a triangle-free graph on $n$ vertices and $x_{1}x_{2},y_{1}y_{2}\notin E(G)$. Let $X=N(x_{1})\cap N(x_{2})$, $Y=N(y_{1})\cap N(y_{2})$. If $x_iy_j\in E(G)$ for some $i\in \{1,2\}$, and $j\in \{1,2\}$, then $X,Y$ are disjoint.
\end{lem}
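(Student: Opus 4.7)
The plan is to argue by direct contradiction, exploiting the defining property of a common neighborhood. Suppose, for contradiction, that some vertex $z$ lies in $X\cap Y$. Then by definition of $X$, both $zx_1$ and $zx_2$ are edges; likewise, by definition of $Y$, both $zy_1$ and $zy_2$ are edges. So $z$ is adjacent to each of $x_1,x_2,y_1,y_2$.

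Next I would use the edge guaranteed by the hypothesis. Pick $i,j\in\{1,2\}$ with $x_iy_j\in E(G)$. From the previous step we already know $zx_i\in E(G)$ and $zy_j\in E(G)$, so the three vertices $z,x_i,y_j$ form a triangle in $G$, contradicting the assumption that $G$ is triangle-free. Hence no such $z$ exists, and $X\cap Y=\emptyset$.

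There is essentially no obstacle here: the lemma is a one-line observation about how a cross-edge between the pairs $\{x_1,x_2\}$ and $\{y_1,y_2\}$ interacts with a putative common vertex in the two neighborhood intersections. The non-edge assumptions $x_1x_2\notin E(G)$ and $y_1y_2\notin E(G)$ are in fact not needed for this disjointness argument itself; they simply ensure that $X$ and $Y$ are meaningful common-neighborhood sets in the sense used elsewhere in the paper. I would keep the presentation short and expect this lemma to serve later as a tool for establishing that various common-neighborhood pieces are pairwise disjoint, which will then feed directly into vertex-counting arguments in the proof of Theorem~\ref{main}.
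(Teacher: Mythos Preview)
Your argument is correct and essentially identical to the paper's: assume a common vertex $z\in X\cap Y$, note that $z$ is adjacent to both endpoints of the cross-edge $x_iy_j$, and obtain the triangle $zx_iy_jz$. The paper simply fixes $i=j=1$ by symmetry and omits the remark about the non-edge hypotheses being unused, but the content is the same.
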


\noindent\emph{\textbf{Proof}.} Without loss of generality,  assume  $x_1y_1\in E(G)$. If there is some $w\in X\cap Y$, then $x_1w$, $wy_1\in E(G)$.  It follows that $wx_{1}y_{1}w$ is a triangle, a contradiction.
$\hfill\square$

\begin{cor}\label{cor-2.5}
Suppose that  $G$ is a triangle-free graph that contains a $C_5$,  $v_1v_2\ldots v_5v_1$. Let $D_i=N(v_{i-1})\cap N(v_{i+1})$, $i=1,2,\ldots,5$. Then $D_1,D_2,\ldots, D_5$ are pairwise disjoint.
\end{cor}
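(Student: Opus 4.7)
The plan is to derive Corollary~\ref{cor-2.5} as an immediate application of Lemma~\ref{lem-3}. Since $G$ is triangle-free and $v_1v_2\ldots v_5v_1$ is a 5-cycle, each pair $v_{i-1}v_{i+1}$ (indices taken modulo 5) is a non-edge of $G$; otherwise $v_{i-1}v_iv_{i+1}$ would form a triangle. Thus every $D_i$ is the common neighborhood of a non-edge, exactly the situation controlled by Lemma~\ref{lem-3}.

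To show $D_i\cap D_j=\emptyset$ for $i\neq j$, I will apply Lemma~\ref{lem-3} with $\{x_1,x_2\}=\{v_{i-1},v_{i+1}\}$ and $\{y_1,y_2\}=\{v_{j-1},v_{j+1}\}$. The only hypothesis that requires checking is the existence of some $x_ay_b\in E(G)$. By the cyclic symmetry of $C_5$, it suffices to treat the two cases $j\equiv i+1$ and $j\equiv i+2\pmod 5$. In the first case, $\{y_1,y_2\}=\{v_i,v_{i+2}\}$ is disjoint from the $x$-pair, and the cycle edge $v_{i-1}v_i$ meets the hypothesis. In the second case, $\{y_1,y_2\}=\{v_{i+1},v_{i+3}\}$ shares the vertex $v_{i+1}$ with the $x$-pair; however, since $i+3\equiv i-2\pmod 5$, the vertices $v_{i-1}$ and $v_{i+3}$ are consecutive on the 5-cycle, so $v_{i-1}v_{i+3}\in E(G)$ still supplies the required edge. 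In both cases Lemma~\ref{lem-3} yields $D_i\cap D_j=\emptyset$.

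I do not foresee any real obstacle; the mildest subtlety is the vertex overlap at $v_{i+1}$ in the second case, which is handled simply by choosing the two non-shared endpoints $v_{i-1}$ and $v_{i+3}$ to witness the edge hypothesis of Lemma~\ref{lem-3}.
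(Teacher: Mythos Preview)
Your proof is correct and follows essentially the same approach as the paper: both arguments reduce pairwise disjointness of the $D_i$'s to two cyclic cases (distance $1$ and distance $2$) and in each case exhibit a cycle edge between the two defining non-edge pairs so that Lemma~\ref{lem-3} applies. The only cosmetic difference is which cycle edge is cited in the distance-$2$ case (you use $v_{i-1}v_{i+3}$, the paper uses $v_{i-1}v_i$), which is immaterial.
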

\begin{proof}
Since $v_{i-1}v_{i}\in E(G)$, by Lemma \ref{lem-3} we infer that $D_{i-1}\cap D_{i}\neq \emptyset$ and $D_{i-2}\cap D_{i+1}\neq \emptyset$. Thus $D_1,D_2,\ldots,D_5$ are pairwise disjoint.
\end{proof}

\begin{lem}\label{lem-4}
If $G$ is a triangle-free graph that contains $H_M$ as a subgraph, then $\delta_{2}(G)\leq\lfloor\frac{n}{8}\rfloor$.
\end{lem}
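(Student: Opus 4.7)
The plan is to exhibit eight non-edges of $G$ whose common neighborhoods are pairwise disjoint and each has size at least $\delta_2(G)$; summing gives $8\delta_2(G)\le n$, so $\delta_2(G)\le\lfloor n/8\rfloor$.

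Label the $H_M$ subgraph so that $u_1u_2\ldots u_8u_1$ is the $8$-cycle and the chords are $u_iu_{i+4}$ for $i=1,2,3,4$ (indices mod $8$). For each $i\in\{1,\ldots,8\}$ consider the pair $P_i=\{u_i,u_{i+2}\}$ and the set $X_i=N(u_i)\cap N(u_{i+2})$. Note $u_i$ and $u_{i+2}$ are at cycle-distance $2$ (not $4$), so they are non-adjacent in $H_M$; moreover $u_{i+1}\in X_i$, so the triangle-freeness of $G$ forces $u_iu_{i+2}\notin E(G)$ as well. Hence each $P_i$ is a non-edge of $G$ and $|X_i|\ge\delta_2(G)$.

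The main (and only real) step is to prove the $X_i$ are pairwise disjoint, and for this I would invoke Lemma \ref{lem-3}: it suffices to exhibit, for every pair $i\neq j$, an $H_M$-edge joining $P_i$ to $P_j$. I would handle this by the cyclic difference $k=j-i\pmod 8$, $k\in\{1,2,3,4\}$ (the remaining cases being symmetric):
\begin{itemize}
\item $k=1$: $u_iu_{i+1}$ is a cycle edge joining $P_i$ to $P_{i+1}$.
\item $k=2$: $P_i$ and $P_{i+2}$ share $u_{i+2}$; the chord $u_iu_{i+4}$ joins $P_i$ to $P_{i+2}$.
\item $k=3$: the cycle edge $u_{i+2}u_{i+3}$ joins $P_i$ to $P_{i+3}$.
\item $k=4$: the chord $u_iu_{i+4}$ joins $P_i$ to $P_{i+4}$.
\end{itemize}
In every case Lemma \ref{lem-3} yields $X_i\cap X_j=\emptyset$.

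Since $X_1,\ldots,X_8\subseteq V(G)$ are pairwise disjoint, $8\delta_2(G)\le\sum_{i=1}^{8}|X_i|\le n$, and integrality gives $\delta_2(G)\le\lfloor n/8\rfloor$. I do not anticipate a serious obstacle: the only mild subtlety is checking that non-edges of $H_M$ remain non-edges of $G$ (handled by triangle-freeness via the common neighbor $u_{i+1}$) and that Lemma \ref{lem-3} applies even when $P_i$ and $P_j$ share a vertex, which is exactly what the chord cases above ensure.
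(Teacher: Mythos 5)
Your proof is correct and follows essentially the same route as the paper: both take the eight sets $N(u_i)\cap N(u_{i+2})$, verify via Lemma \ref{lem-3} (using the cycle edges for differences $1,3$ and the chords for differences $2,4$) that they are pairwise disjoint, and conclude by counting. The only cosmetic difference is that the paper phrases the final count as a proof by contradiction.
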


\noindent\emph{\textbf{Proof}.} Assume that a cycle $v_{1}v_{2}v_{3}v_{4}v_{5}v_{6}v_{7}v_{8}v_{1}$ with chords $v_{1}v_{5},v_{2}v_{6},v_{3}v_{7}$ and $v_{4}v_{8}$ is  a copy of  $H_M$  in $G$. Since $G$ is triangle-free, $v_iv_{i+2}\notin E(G)$ for $i=1,2,\ldots,8$.  Let $S_i=N(v_{i})\cap N(v_{i+2})$ with subscripts modulo 8. Since $v_{i-1}v_{i}\in E(G)$, by Lemma \ref{lem-3} we infer that $S_{i-1}\cap S_{i}\neq \emptyset$ and $S_{i-3}\cap S_{i}\neq \emptyset$. Since $v_iv_{i+4}\in E(G)$,  by Lemma \ref{lem-3} we infer that $S_i\cap S_{i+2}\neq \emptyset$ and $S_i\cap S_{i+4}\neq \emptyset$. Thus $S_1,S_2,\ldots,S_8$ are pairwise disjoint. If $\delta_{2}(G)>\lfloor\frac{n}{8}\rfloor$  then $|S_i|>\lfloor\frac{n}{8}\rfloor$ for all $i=1,2,\ldots,8$. It follows that
\[
n=|V(G)|=\sum_{1\leq i\leq 8} |S_i| \geq 8\left(\left\lfloor\frac{n}{8}\right\rfloor+1\right)>n,
\]
a contradiction. Thus $\delta_{2}(G)\leq\lfloor\frac{n}{8}\rfloor$.
$\hfill\square$

\section{Proof of  Theorems \ref{main} and \ref{C5}}
In this section, we prove Theorems \ref{main} and \ref{C5}.

\noindent\emph{\textbf{Proof of Theorem \ref{main} (i)}.} Let $G$ be a triangle-free graph on $n$ vertices with $\delta_{2}(G)>\lfloor\frac{n}{5}\rfloor$. By Lemma \ref{lem-2},  either $G$ is bipartite or $G$ contains a $C_5$. In the former case, there is nothing to prove. Thus we assume that $G$ contains a $C_5$ and let $C=v_1v_2v_3v_4v_5v_1$ be such a $C_5$.  Let $D_i=N(v_{i-1})\cap N(v_{i+1})$ with subscripts modulo 5, $i=1,2,\ldots,5$.
Since $v_{i-1}v_{i+1}\notin E(G)$ and $\delta_2(G)> \lfloor\frac{n}{5}\rfloor$, we have $|D_i|\geq\lfloor\frac{n}{5}\rfloor+1$. By Corollary \ref{cor-2.5}, $D_1,D_2,\ldots,D_5$ are pairwise disjoint. It follows that
\[
n=|V(G)|=\sum_{1\leq i\leq 5} |D_i| \geq 5\left(\left\lfloor\frac{n}{5}\right\rfloor+1\right)>n,
\]
a contradiction. Thus $G$ is bipartite.
$\hfill\square$

\begin{rem}
If $G$ is a triangle-free graph on $n$ vertices and $\delta_{2}(G)=\frac{n}{5}$( $n$ is divisible by 5), then
\[
n=|V(G)|=\sum_{1\leq i\leq 5} |D_i| \geq 5\times\frac{n}{5}=n.
\]
It follows that $|D_i|=\frac{n}{5}$, $i=1,2,\ldots,5$. That is, $(D_1,D_2,\ldots,D_5)$ forms a balanced $C_{5}$-blow-up.
\end{rem}

For $D\subset V(G)$, we use $G[D]$ to denote the  subgraph of $G$ induced by $D$. 

\noindent\emph{\textbf{Proof of Theorem \ref{main} (ii)}.} Let $G$ be a triangle-free graph on $n$ vertices with $\delta_{2}(G)>\lfloor\frac{n}{8}\rfloor$. We may assume that $G$ is not bipartite. Then by Lemma \ref{lem-2}, $G$  contains a $C_5$. Let $C=v_1v_2v_3v_4v_5v_1$ be such a $C_5$. Note that $C$ is an induced cycle since $G$ is triangle-free, i.e., $v_{i-1}v_{i+1}\notin E(G)$.   Let
$D_{i}=N(v_{i-1})\cap N(v_{i+1})$ with subscripts modulo 5, $i=1,2,3,4,5$ and let $D=\cup_{1\leq i\leq 5} D_i$. By Corollary \ref{cor-2.5}, $D_1,D_2,\ldots,D_5$ are pairwise disjoint.

\begin{claim}
$G[D]$ is homomorphic to $C_{5}$.
\end{claim}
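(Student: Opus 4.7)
The plan is to exhibit the natural homomorphism. Fix a target $C_5$ with vertices $u_1,u_2,u_3,u_4,u_5$ and edges $u_iu_{i+1}$ (indices mod $5$). By Corollary~\ref{cor-2.5} the sets $D_1,\ldots,D_5$ are pairwise disjoint, so the map $\sigma\colon D\to\{u_1,\ldots,u_5\}$ defined by $\sigma(w)=u_i$ whenever $w\in D_i$ is well defined. To show that $\sigma$ is a graph homomorphism it suffices to verify that every edge of $G[D]$ maps to an edge of $C_5$, which amounts to ruling out two kinds of edges in $G[D]$: an edge inside a single $D_i$ (which would map to the non-edge $u_iu_i$), and an edge between $D_i$ and $D_{i+2}$ (which would map to the non-edge $u_iu_{i+2}$).

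Both restrictions follow immediately from the triangle-freeness of $G$ together with the common-neighborhood structure used to define the $D_i$'s. For the first, any two vertices of $D_i$ both lie in $N(v_{i-1})$, so an edge between them would combine with $v_{i-1}$ to form a triangle. For the second, since
\[
D_i=N(v_{i-1})\cap N(v_{i+1})\quad\text{and}\quad D_{i+2}=N(v_{i+1})\cap N(v_{i+3})
\]
both lie inside $N(v_{i+1})$, any edge between $D_i$ and $D_{i+2}$ would close a triangle with $v_{i+1}$.

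Since each case reduces to a one-line triangle-freeness check, no serious obstacle is expected in proving the claim itself. The real work of Theorem~\ref{main}(ii) will come afterwards, in extending this homomorphism from $D$ to all of $V(G)$; the present claim simply handles the portion of the graph that sits on common neighborhoods along $C$.
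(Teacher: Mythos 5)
Your proposal is correct and follows essentially the same route as the paper: use the pairwise disjointness from Corollary~\ref{cor-2.5} to get a well-defined map $D_i\mapsto u_i$, note that each $D_i$ is independent because its vertices share the neighbor $v_{i-1}$, and rule out $D_i$--$D_{i+2}$ edges because both sets lie in $N(v_{i+1})$, so such an edge would close a triangle. The two triangle-freeness checks are exactly the ones the paper performs.
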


\noindent\emph{\textbf{Proof}.} Since $\delta_2(G)>\lfloor\frac{n}{8}\rfloor$, $|D_i|>\lfloor\frac{n}{8}\rfloor$.   Since $G$ is triangle-free, each $D_i$ is an independent set. We claim that each edge in $E(G[D])$ is between $D_i$ and $D_{i+1}$. Indeed, otherwise by symmetry assume that there is some $xy\in E(G[D])$ with $x\in D_1$ and $y\in D_3$. Then $xv_2yx$ is a triangle, a contradiction. Thus $G[D]$ is homomorphic to $C_{5}$.
$\hfill\square$

Clearly $|D|=\sum_{1\leq i\leq 5}|D_i|>5\lfloor\frac{n}{8}\rfloor$.  Let $W=V(G)\setminus D$. If $W=\emptyset$ then we are done. Thus we assume $W\neq \emptyset$.

\begin{claim}\label{claim-1}
For each $x\in W$, $x$ has neighbors in   at least two $D_{i}$'s.
\end{claim}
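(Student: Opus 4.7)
The plan is to argue by contradiction: suppose some $x \in W$ has $N(x) \cap D \subseteq D_1$ (possibly empty), and derive a vertex count exceeding $n$. The key structural input from the previous claim is that each $D_j$ is independent and edges of $G[D]$ only go between consecutive parts, so $N(v_i) \cap D = D_{i-1} \cup D_{i+1}$. For each $i$ with $xv_i \notin E(G)$, define $A_i = N(x) \cap N(v_i)$; the minimum common degree hypothesis gives $|A_i| > \lfloor n/8 \rfloor$, and the identity above forces $A_i \cap D = N(x) \cap (D_{i-1} \cup D_{i+1})$, which is empty whenever $1 \notin \{i-1, i+1\}$. For disjointness, Lemma \ref{lem-3} applied to $v_i v_{i+1} \in E(G)$ gives $A_i \cap A_{i+1} = \emptyset$, and any $w \in A_i \cap A_{i+2}$ must lie in $D_{i+1} \cap N(x)$, which vanishes whenever $i+1 \neq 1$.

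I would first dispatch the easy cases. If $N(x) \cap D = \emptyset$, then every $xv_j$ is a non-edge, every $A_i$ lies in $W$, and all five are pairwise disjoint, giving $|W| \geq 5(\lfloor n/8 \rfloor + 1)$ and therefore $n \geq 10(\lfloor n/8 \rfloor + 1) > n$. If $\emptyset \neq N(x) \cap D \subseteq D_1$ but $xv_1 \notin E(G)$, then $A_1, A_3, A_4 \subseteq W$ are still pairwise disjoint (using $A_1 \cap A_3 \subseteq D_2 \cap N(x) = \emptyset$ and $A_1 \cap A_4 \subseteq D_5 \cap N(x) = \emptyset$), so $|W| \geq 3(\lfloor n/8 \rfloor + 1)$ and $n \geq 8(\lfloor n/8 \rfloor + 1) > n$. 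Both cases use the strict inequality $8(\lfloor n/8 \rfloor + 1) > n$.

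The main obstacle is the remaining subcase $xv_1 \in E(G)$: only $A_2, A_3, A_4, A_5$ are defined, and the analogous disjointness inside $W$ only yields $|W| \geq 2(\lfloor n/8 \rfloor + 1)$, which gives the insufficient bound $n \geq 7(\lfloor n/8 \rfloor + 1)$. To close this case, I would exploit $N(x) \cap N(v_1) = \emptyset$ (forced by triangle-freeness together with $xv_1 \in E(G)$): for any $z \in A_3$, the non-edge $zv_1$ (since $x, v_1, z$ would otherwise form a triangle) yields $|N(z) \cap N(v_1)| > \lfloor n/8 \rfloor$, and triangle arguments through $v_3$ confine this common neighborhood to $D_5 \cup (N(v_1) \cap W)$, a set entirely disjoint from $N(x)$. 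Performing the symmetric computation for $b \in A_4$ and combining with the auxiliary $C_5$ given by $v_1 x z v_3 v_2$, to which Corollary \ref{cor-2.5} applies and produces five more pairwise disjoint sets of size $> \lfloor n/8 \rfloor$ (three of which coincide with $D_2$, $A_2$, and $A_3$), one accounts carefully for overlaps between the new sets and the original $D_i$'s; this refined counting should push $|V(G)|$ strictly above $n$ and finish the contradiction.
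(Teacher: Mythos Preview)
Your first two subcases are fine, but the case split they create is a detour, and the third subcase --- the only one with real content --- is not actually closed. You correctly note that when $xv_1\in E(G)$ you only get seven ``easy'' disjoint blocks, and you propose to repair this via the auxiliary $C_5$ $v_1xz v_3v_2$ (with $z\in A_3$) together with the set $N(z)\cap N(v_1)$ and its $A_4$-analogue. But if you try to assemble eight pairwise disjoint sets from these ingredients you run into unavoidable overlaps: from the auxiliary $C_5$ the two genuinely new sets are $T_1=N(v_1)\cap N(z)$ and $T_2=N(z)\cap N(v_2)$, and one has $T_1\cap D\subseteq D_5$ while $T_2\cap D\subseteq D_1\cup D_3$; the symmetric sets built from $b\in A_4$ satisfy $T_1'\cap D\subseteq D_2$ and $T_2'\cap D\subseteq D_1\cup D_4$. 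No choice of eight among $\{D_i\}$, $\{A_i\}$, $\{T_j,T_j'\}$ is forced to be pairwise disjoint, because $A_2\cap A_5$, $T_1\cap D_5$, $T_1'\cap D_2$, $T_2\cap D_3$, $T_2'\cap D_4$ can each be nonempty. The sentence ``this refined counting should push $|V(G)|$ strictly above $n$'' is exactly where the argument is missing.

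The paper's proof avoids your case split altogether by never using $D_1$. It keeps $D_2,D_3,D_4,D_5$ and $A_2,A_3,A_4$ (your notation), which are pairwise disjoint regardless of whether $xv_1\in E(G)$, and then manufactures the eighth block as $X=N(w_3)\cap N(w_4)$ for any $w_3\in A_3$, $w_4\in A_4$: since $w_3v_3,w_4v_4,xw_3\in E(G)$, Lemma~\ref{lem-3} gives $X$ disjoint from $D_2,D_3,D_4,D_5$ and from $A_2,A_3,A_4$. This single extra common-neighbourhood set, built from a non-edge inside $N(x)$ rather than a non-edge to $v_1$, is the idea your proposal lacks. Ironically, your $z\in A_3$ and $b\in A_4$ are exactly the right vertices --- you just need to look at $N(z)\cap N(b)$ instead of $N(z)\cap N(v_1)$.
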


\noindent\emph{\textbf{Proof}.} If $x$ has neighbors in at most one $D_{i}$, then by symmetry assume that $x$  has no neighbor in $D_2\cup D_3\cup D_4\cup D_5$. As $v_i\in D_i$, we have $xv_i\notin E(G)$.
 Let $W_i=N(x)\cap N(v_i)$, $i=2,3,4,5$. Then $\delta_2(G)>\lfloor\frac{n}{8}\rfloor$ implies $|W_i|>\lfloor\frac{n}{8}\rfloor$.

 We claim that $D_i\cap W_j=\emptyset$ for all  $i\in \{2,3,4,5\}$ and $j\in \{2,3,4\}$. Indeed, if $D_i\cap W_j\neq\emptyset$ for some  $i\in \{2,3,4,5\}$ and $j\in \{2,3,4\}$, then $N(x)\cap D_i\neq\emptyset$, contradicting our assumption that $x$  has no neighbor in $D_2\cup D_3\cup D_4\cup D_5$. Thus $D_i\cap W_j=\emptyset$ for all  $i\in \{2,3,4,5\}$ and $j\in \{2,3,4\}$.

Since $v_2v_3\in E(G)$, by Lemma \ref{lem-3} we have $W_2\cap W_3=\emptyset$. Since $v_3v_4\in E(G)$, by Lemma \ref{lem-3} we have $W_3\cap W_4=\emptyset$. If $z \in W_2\cap W_4$ then $z\in D_3$, then $x$ has neighbor $z$ in $D_3$, contradicting our assumption that $x$  has no neighbor in $D_2\cup D_3\cup D_4\cup D_5$. Thus we also have   $W_2\cap W_4=\emptyset$. Thus $D_2,D_3,D_4,D_5,W_2,W_3,W_4$ are pairwise disjoint.

 \begin{figure}[H]
	\centering
	\begin{tabular}{@{\extracolsep{\fill}}c@{}c@{\extracolsep{\fill}}}
		\includegraphics[width=0.3\linewidth]{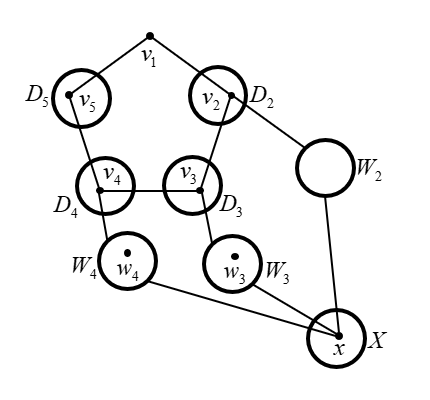}
	\end{tabular}
	\caption{$G$ in Claim \ref{claim-1}}
	\label{fig:image_with_table}
\end{figure}

Let us take $w_3\in W_3$ and $w_4\in W_4$. Then $w_3w_4\notin E(G)$ since $w_3,w_4\in N(x)$ and $G$ is triangle-free. Let $X=N(w_3)\cap N(w_4)$. Then $|X|>\lfloor\frac{n}{8}\rfloor$. We want to show that  $X,D_2,D_3,D_4,D_5,W_2,W_3,W_4$ are pairwise disjoint. Since $w_3v_3\in E(G)$, by Lemma \ref{lem-3} we know $X\cap D_2=\emptyset=X\cap D_4$. By $w_4v_4\in E(G)$ and Lemma \ref{lem-3}, $X\cap D_3=\emptyset = X\cap D_5$.  Similarly, using $xw_3\in E(G)$, $X\cap W_j=\emptyset$ follows from Lemma \ref{lem-3} for $j=2,3,4$.  Thus, $X,D_2,D_3,D_4,D_5,W_2,W_3,W_4$ are pairwise disjoint (as shown in Figure 3). Then
\[
n=|V(G)|\geq 8\cdot\left(\left\lfloor\frac{n}{8}\right\rfloor+1\right)>n,
\]
a contradiction. Therefore $x$ has neighbors in   at least two $D_{i}$'s.
$\hfill\square$

\begin{claim}\label{claim-2}
For each $x\in W$, $x$ has no neighbor in two consecutive $D_{i}'s$.
\end{claim}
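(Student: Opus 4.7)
The plan is to argue by contradiction: suppose some $x\in W$ has neighbors $a\in D_i$ and $b\in D_{i+1}$; by cyclic relabeling assume $i=1$, so $a\in D_1$ and $b\in D_2$. The memberships force $av_2,av_5,bv_1,bv_3\in E(G)$, and triangle-freeness immediately yields $ab\notin E$ (else $abx$ is a triangle) and $xv_j\notin E$ for $j\in\{1,2,3,5\}$ (each would close a triangle through $a$ or $b$). The only edge among $\{v_1,\ldots,v_5,a,b,x\}$ whose status is left undetermined is $xv_4$.

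The central object is the 8-cycle
\[
\mathcal{C}\colon\; a\to v_2\to v_3\to v_4\to v_5\to v_1\to b\to x\to a,
\]
all of whose edges lie in $G$. Its four distance-$4$ chord positions are $(a,v_5),(v_2,v_1),(v_3,b),(v_4,x)$; the first three are edges by the adjacencies already listed. Hence if $xv_4\in E(G)$, all four chords are present and $G[\{v_1,\ldots,v_5,a,b,x\}]$ contains $H_M$. Lemma \ref{lem-4} then gives $\delta_2(G)\leq\lfloor n/8\rfloor$, contradicting the hypothesis.

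Otherwise $xv_4\notin E(G)$, and I set $Z:=N(a)\cap N(b)$. Since $ab\notin E$, $|Z|>\lfloor n/8\rfloor$ and $x\in Z$; every $z\in Z$ inherits $zv_j\notin E$ for $j\in\{1,2,3,5\}$ by the same triangle arguments. If any $z\in Z$ has $zv_4\in E$, replay the $H_M$ argument with $z$ in place of $x$. So further assume $Z\cap N(v_4)=\emptyset$. Now inspect the bipartite structure between $D_4$ and $Z$: if there is any edge $y_4z$ with $y_4\in D_4$ and $z\in Z$, then $y_4\neq v_4$ (as $v_4z\notin E$), and the modified 8-cycle $a\to v_2\to v_3\to y_4\to v_5\to v_1\to b\to z\to a$ has all four distance-$4$ chords $av_5,v_2v_1,v_3b,y_4z$ as edges, producing $H_M\subseteq G$ and again contradicting Lemma \ref{lem-4}.

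The remaining sub-case, in which $Z\cap N(v_4)=\emptyset$ and $D_4$ is completely non-adjacent to $Z$, is the main obstacle. The naive attempt to finish via eight pairwise disjoint common-neighborhood sets $D_1,\ldots,D_5,Z,N(a)\cap N(v_4),N(b)\cap N(v_4)$ breaks down because the last two sets inevitably contain $v_5\in D_5$ and $v_3\in D_3$, respectively. Overcoming this will require exploiting the $D_4$-$Z$ non-adjacency---by applying $\delta_2(G)>\lfloor n/8\rfloor$ to pairs $(d,z)\in D_4\times Z$ and carefully combining the resulting common neighborhoods with Lemma \ref{lem-3}---to either force a still-hidden chord of $\mathcal{C}$ (producing $H_M$) or to exhibit eight pairwise disjoint large sets after suitable trimming, which together sum to more than $n$ and yield the desired contradiction.
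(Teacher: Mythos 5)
Your proof is genuinely incomplete. The part you do carry out is correct and even a little stronger than what the paper does at the corresponding stage: the paper only rules out $N(x)\cap D_4\neq\emptyset$ by exhibiting $H_M$ and invoking Lemma \ref{lem-4}, whereas your $8$-cycle $a\to v_2\to v_3\to y_4\to v_5\to v_1\to b\to z\to a$ rules out any edge between $D_4\cup\{v_4\}$ and $Z=N(a)\cap N(b)$. But you explicitly stop at the main sub-case, and the direction you sketch for it (applying $\delta_2$ to pairs $(d,z)\in D_4\times Z$ and hoping to force a hidden chord) is not how the argument closes; as stated, the claim is not proved.

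The missing idea is simply the right choice of eight pairwise disjoint common neighbourhoods. In the paper's notation ($x_1=a$, $x_2=b$), once $N(x)\cap D_4=\emptyset$ is established one takes
\[
D_3,\quad D_4,\quad D_5,\quad N(x)\cap N(v_3),\quad N(x)\cap N(v_5),\quad N(x_1)\cap N(x_2),\quad N(x_2)\cap N(v_5),\quad N(x_1)\cap N(v_3).
\]
Each is the common neighbourhood of a non-adjacent pair (e.g.\ $x_1v_3\notin E(G)$ because $x_1v_2,v_2v_3\in E(G)$, and $x_2v_5\notin E(G)$ because $x_2v_1,v_1v_5\in E(G)$), so each has size greater than $\lfloor n/8\rfloor$; pairwise disjointness follows from repeated applications of Lemma \ref{lem-3} together with the two observations that $N(x)\cap N(v_3)\cap N(v_5)\subseteq D_4$ is killed by $N(x)\cap D_4=\emptyset$, and that the sets are anchored at $x,x_1,x_2,v_3,v_5$, which span enough edges for Lemma \ref{lem-3} to apply to every pair. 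This gives $n\geq 8(\lfloor n/8\rfloor+1)>n$. Note that this family retains only $D_3,D_4,D_5$ from the original five blocks and pairs $x_1,x_2$ with the \emph{far} cycle vertices $v_3,v_5$ --- precisely sidestepping the obstruction you correctly identified, namely that $N(a)\cap N(v_4)\ni v_5\in D_5$ and $N(b)\cap N(v_4)\ni v_3\in D_3$. Without this (or an equivalent) completion of the last sub-case, your argument does not establish the claim.
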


\noindent\emph{\textbf{Proof}.} Suppose for contradiction that  $x$ has neighbors in $D_{1}$ and $D_{2}$. Clearly $xv_1,xv_2\notin E(G)$ since $G$ is triangle-free. Let $x_1\in N(x)\cap D_{1}$ and $x_2\in N(x)\cap D_{2}$.

 If there exists $x_4\in N(x)\cap D_4$,  then the graph with the vertex set $\left\lbrace v_1,v_2,v_3,x_4,v_5,x,x_1,x_2\right\rbrace$ and the edge set
\[\left\lbrace v_1v_2,v_2v_3,v_3x_4,x_4v_5,v_5v_1,xx_1,xx_2,xx_4,x_1v_5,x_1v_2,x_2v_1,x_2v_3\right\rbrace\]
is a copy of $H_M$ in  $G$. By Lemma \ref{lem-4} we obtain  $\delta_{2}(G)\leq\lfloor\frac{n}{8}\rfloor$, a contradiction. Thus we may assume that $N(x)\cap D_4=\emptyset$.

Let $W_i=N(x)\cap N(v_i)$, $i=3,5$. Since $x_1\in N(x)\cap D_1$ and $x_2\in N(x)\cap D_2$, we have $x_1v_5,x_2v_3\in E(G)$. It follows that $x_1\in W_5$ and  $x_2\in W_3$. By $\delta_2(G)>\lfloor\frac{n}{8}\rfloor$ we have  $|W_i|>\lfloor\frac{n}{8}\rfloor$, $i=3,5$. Since $v_3v_4\in E(G)$,    by Lemma \ref{lem-3} we know $W_3\cap D_3=\emptyset = W_3\cap D_5$. Since $v_4v_5\in E(G)$  by Lemma \ref{lem-3} we have $W_5\cap D_3=\emptyset = W_5\cap D_5$. Since $N(x)\cap D_4=\emptyset$ and $W_3\cup W_5\subset N(x)$, we also have $W_3\cap D_4=\emptyset=W_5\cap D_4$. Thus $W_3\cup W_5$ is disjoint to $D_3\cup D_4\cup D_5$. If $z\in W_3\cap W_5$ then $z\in D_4$, contradicting $N(x)\cap D_4=\emptyset$. Thus $W_3,W_5,D_3, D_4, D_5$ are pairwise disjoint.

\begin{figure}[H]
	\centering
	\begin{tabular}{@{\extracolsep{\fill}}c@{}c@{\extracolsep{\fill}}}
		\includegraphics[width=0.3\linewidth]{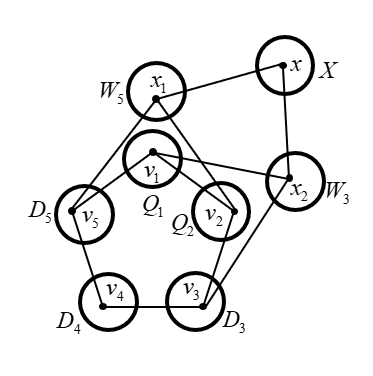}
	\end{tabular}
	\caption{$G$ in  Claim \ref{claim-2} }
	\label{fig:image_with_table}
\end{figure}

 Since $x_1,x_2\in N(x)$, we have $x_1x_2\notin E(G)$. Let $X=N(x_{1})\cap N(x_{2})$.  Then $|X|>\lfloor\frac{n}{8}\rfloor$. Using $x_1v_2\in E(G)$, by Lemma \ref{lem-3} we have $X\cap D_3=\emptyset$.  Using $x_2v_3\in E(G)$, by Lemma \ref{lem-3} we have $X\cap D_4=\emptyset$. Using $x_2v_1\in E(G)$, by Lemma \ref{lem-3} we have $X\cap D_5=\emptyset$. Similarly, $X\cap W_3=\emptyset =X\cap W_5$ since $xx_1\in E(G)$. Thus $X,W_3,W_5,D_3, D_4, D_5$ are pairwise disjoint.

 Note that $x_2v_5$, $x_1v_3\notin E(G)$. We can define $Q_{1}=N(x_2)\cap N(v_{5})$ and  $Q_{2}=N(x_1)\cap N(v_{3})$ with $|Q_{1}|>\lfloor\frac{n}{8}\rfloor$, $|Q_{2}|>\lfloor\frac{n}{8}\rfloor$. By Lemma \ref{lem-3}, using $v_4v_5\in E(G)$ we have $Q_1\cap D_3=\emptyset=Q_1\cap D_5$, using $x_2v_3\in E(G)$ we have $Q_1\cap D_4=\emptyset$, using $xx_2\in E(G)$ we have $Q_1\cap W_3=\emptyset=Q_1\cap W_5$, using $x_1v_5$ we have $Q_1\cap X=\emptyset$. Similarly,  $Q_2\cap D_3=\emptyset=Q_2\cap D_5$ follows from $v_3v_4\in E(G)$, $Q_2\cap D_4=\emptyset$ follows from $x_1v_5\in E(G)$, $Q_2\cap W_3=\emptyset =Q_2\cap W_5$ follows from $xx_1\in E(G)$ and  $Q_2\cap X=\emptyset=Q_2\cap Q_1$ follows from $x_2v_3\in E(G)$. Thus, $X,Q_1,Q_2,W_3,W_5,D_3,D_4,D_5$ are pairwise disjoint (as shown in Figure 4). Then
\[
n=|V(G)|\geq 8\cdot\left(\left\lfloor\frac{n}{8}\right\rfloor+1\right)>n,
\]
a contradiction. Therefore $x$ has no neighbor in two consecutive $D_{i}'s$.
$\hfill\square$

For $i=1,2,3,4,5$, define
\[W_{i}=\left\lbrace x\in W\colon N(x)\cap D_{i-1}\neq\emptyset,\ N(x)\cap D_{i+1}\neq\emptyset,\ N(x)\cap (D\setminus (D_{i-1}\cup D_{i+1}))=\emptyset\right\rbrace.\]
By Claims \ref{claim-1} and \ref{claim-2}, $(W_1,W_2,\ldots,W_5)$ is a partition of $W$.

\begin{claim}\label{claim-3}
$W_{i}$ is an independent set in $G$ for each $i=1,2,3,4,5$.
\end{claim}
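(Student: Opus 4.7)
By the rotational symmetry of $C_5$ it suffices to prove that $W_1$ is independent, so suppose for contradiction that $xy\in E(G)$ with $x,y\in W_1$. The plan is to exhibit eight pairwise disjoint vertex subsets of $V(G)$, each of size strictly greater than $\lfloor n/8\rfloor$; this contradicts $|V(G)|=n$ because $8(\lfloor n/8\rfloor+1)\ge n+1$. Five of the sets will be $D_1,\ldots,D_5$, already known to be pairwise disjoint and each of size $>\lfloor n/8\rfloor$. The remaining three I plan to extract from common neighborhoods involving $x$, $y$, $v_2$, and $v_5$.

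First I would record that $x\in W_1$ forces $N(x)\cap D_j=\emptyset$ for $j\in\{1,3,4\}$, and since $v_j\in D_j$ this gives $xv_1,xv_3,xv_4\notin E(G)$ (and likewise for $y$). In the principal case where $xv_2,xv_5,yv_2,yv_5$ are also non-edges, define $X_i=N(x)\cap N(v_i)$ and $Y_i=N(y)\cap N(v_i)$ for $i\in\{2,5\}$; each has size $>\lfloor n/8\rfloor$ by $\delta_2(G)>\lfloor n/8\rfloor$. To verify $X_2\cap D=\emptyset$: for $j\in\{1,3,4\}$ use $X_2\cap D_j\subseteq N(x)\cap D_j=\emptyset$; for $j\in\{2,5\}$, any $z\in X_2\cap D_j$ would lie in $N(v_1)\cap N(v_2)$ (since $D_2,D_5\subseteq N(v_1)$), producing the triangle $zv_1v_2$. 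The analogous computations show $X_5,Y_2,Y_5$ are each disjoint from $D$. Pairwise disjointness among them follows from Lemma~\ref{lem-3} applied with the edge $xy$ (which kills every mixed intersection $X_i\cap Y_j$) together with the direct observations $X_2\cap X_5\subseteq N(x)\cap(N(v_2)\cap N(v_5))=N(x)\cap D_1=\emptyset$ and $Y_2\cap Y_5=\emptyset$ by symmetry. Thus $D_1,\ldots,D_5,X_2,X_5,Y_2$ are eight pairwise disjoint sets of the required size.

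The hard part will be the residual situation in which some of $xv_2,xv_5,yv_2,yv_5$ are edges. Triangle-freeness and $xy\in E$ together forbid both $xv_2$ and $yv_2$ being edges (one would get the triangle $xyv_2$) and similarly for $v_5$, while the $W$-condition forbids both $xv_2$ and $xv_5$ being edges (it would place $x$ in $N(v_2)\cap N(v_5)=D_1$). Hence at most two of the four can be edges, one from $\{xv_2,yv_2\}$ and one from $\{xv_5,yv_5\}$. When only one is an edge, I would replace the single missing set $X_i$ (or $Y_i$) by a substitute of the form $N(x)\cap N(y_j)$ (or $N(y)\cap N(x_j)$) for a fixed $y_j\in N(y)\cap D_j$ or $x_j\in N(x)\cap D_j$, verify disjointness from $D$ via the same two-step triangle argument, and redo the pairwise disjointness check. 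The most delicate subcase, where two such edges occur simultaneously, is where I expect the real obstacle; I would attempt to locate an eighth common-neighborhood set by an ad hoc construction, and failing that, try to exhibit a copy of $H_M$ in $G$ and invoke Lemma~\ref{lem-4}.
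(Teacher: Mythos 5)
Your main-case argument is sound: with all four of $xv_2,xv_5,yv_2,yv_5$ non-edges, the eight sets $D_1,\ldots,D_5$, $N(x)\cap N(v_2)$, $N(x)\cap N(v_5)$, $N(y)\cap N(v_2)$ are indeed pairwise disjoint and each of size greater than $\lfloor n/8\rfloor$, and your disjointness checks (the two-step triangle argument against $D_2,D_5$, Lemma~\ref{lem-3} with the edge $xy$, and $X_2\cap X_5\subseteq N(x)\cap D_1$) are all correct. The problem is that this is not a proof of the claim: the residual cases are not optional, and you explicitly leave the two-edge subcase (which, after your correct reductions, is exactly the configuration $xv_2,yv_5\in E(G)$ up to symmetry) unresolved, offering only a list of things you ``would attempt.'' In that configuration $v_1v_2xyv_5v_1$ is a second induced $C_5$, two of your four auxiliary sets are simply unavailable, and it is not at all evident that an ad hoc eighth set or a copy of $H_M$ exists; nothing in your write-up establishes either. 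So the proof is incomplete precisely where you predicted it would be hard.

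The gap is an artifact of pairing $x$ and $y$ with $v_2$ and $v_5$, the two cycle vertices to which $x,y\in W_1$ \emph{might} be adjacent. The paper instead pairs them with $v_3$ and $v_4$: since $x,y\in W_1$ forces $N(x)\cap D_3=N(x)\cap D_4=\emptyset$ and $v_j\in D_j$, the non-adjacencies $xv_3,xv_4,yv_3,yv_4\notin E(G)$ hold unconditionally, so the four sets $N(x)\cap N(v_3)$, $N(x)\cap N(v_4)$, $N(y)\cap N(v_3)$, $N(y)\cap N(v_4)$ are always legitimate, pairwise disjoint (via the edges $v_3v_4$ and $xy$), and disjoint from $D_1,D_3,D_4$. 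That yields seven sets with no case analysis; the only residual choice is the eighth set, taken as $N(x)\cap N(v_2)$ or $N(y)\cap N(v_2)$ according to which of $xv_2,yv_2$ is a non-edge, and triangle-freeness of $xyv_2$ guarantees at least one is. (Note the paper consequently drops $D_2$ and $D_5$ from the list, since $N(x)\cap N(v_3)$ meets $D_2$ at the vertex $x_2$.) If you rebuild your eight sets this way, your argument closes; as it stands, it does not.
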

\noindent\emph{\textbf{Proof}.}
Suppose for contradiction that  $xy$ is an edge in $W_{1}$. Let $x_{2}\in N(x)\cap D_{2}$, $x_{5}\in N(x)\cap D_{5}$, $y_{2}\in N(y)\cap D_{2}$ and $y_{5}\in N(y)\cap D_{5}$. Since $G$ is triangle-free, $x_2,x_5,y_2,y_5$ are distinct from each other.

\begin{figure}[H]
	\centering
	\begin{tabular}{@{\extracolsep{\fill}}c@{}c@{\extracolsep{\fill}}}
		\includegraphics[width=0.4\linewidth]{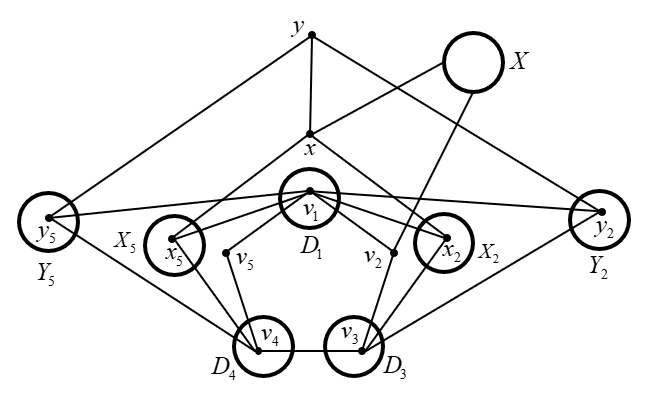}
	\end{tabular}
	\caption{$G$ in Claim \ref{claim-3}}
	\label{fig:image_with_table}
\end{figure}

Since $x,y\in W_1$, by the definition of $W_1$ there is no edge between $v_{3},v_{4}$ and $x,y$. Let $X_{2}=N(x)\cap N(v_{3})$, $X_{5}=N(x)\cap N(v_{4})$, $Y_{2}=N(y)\cap N(v_{3})$ and  $Y_{5}=N(y)\cap N(v_{4})$. Clearly $x_2\in X_2$, $y_2\in Y_2$, $x_5\in X_5$ and  $y_5\in Y_5$. By $\delta_2(G)>\lfloor\frac{n}{8}\rfloor$, we have  $|X_{2}|>\lfloor\frac{n}{8}\rfloor$, $|X_{5}|>\lfloor\frac{n}{8}\rfloor$, $|Y_{2}|>\lfloor\frac{n}{8}\rfloor$ and $|Y_{5}|>\lfloor\frac{n}{8}\rfloor$.
 
 Since $x,y\in W_1$, by the definition of $W_1$ we infer that 
 \[
 X_i \cap (D_1\cup D_3\cup D_4)=\emptyset= Y_j\cap (D_1\cup D_3\cup D_4),\ i=2,5, \ j=2,5.
 \]
 Thus $(X_2\cup X_5\cup Y_2\cup Y_5)\cap (D_1\cup D_3\cup D_4)=\emptyset$.

Applying Lemma \ref{lem-3} with $v_3v_4$, we know $X_2\cap X_5=\emptyset=Y_2\cap Y_5$. Applying Lemma \ref{lem-3} with $xy$,  $X_i\cap Y_j=\emptyset$ for $i=2,5$ and $j=2,5$. Thus, $D_1,D_3,D_4,X_2,X_5,Y_2,Y_5$ are pairwise disjoint.

Since $G$ is triangle-free,  at most one of $xv_2$ and $yv_2$ is an edge in $E(G)$. Without loss of generality,  assume $xv_2\notin E(G)$. Then let $X=N(x)\cap N(v_{2})$. Since $x\in W_1$, $x$ have no neighbor in $D_1,D_3,D_4$. It follows that $X\cap D_i=\emptyset$, $i=1,3,4$.   If there exists $z\in X\cap X_5$, then $z\in N(v_2)\cap N(v_4)$. It follows that  $z\in D_3$, contradicting $X\cap D_3=\emptyset$. Thus $X\cap X_5=\emptyset$. Applying Lemma \ref{lem-3} with $v_2v_3$, we know $X\cap X_2=\emptyset$. Applying Lemma \ref{lem-3} with $xy$,  $X\cap Y_2=\emptyset=X\cap Y_5$. Thus, $X, D_1,D_3,D_4,X_2,X_5,Y_2,Y_5$ are pairwise disjoint (as shown in Figure 5). Then
\[
n=|V(G)|\geq 8\cdot\left(\left\lfloor\frac{n}{8}\right\rfloor+1\right) >n,
\]
a contradiction.
$\hfill\square$

\begin{claim}\label{claim-4}
There is no edge between $W_{i}$ and $W_{i+2}$ for $i=1,2,3,4,5$.
\end{claim}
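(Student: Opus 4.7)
Plan: Assume for contradiction that $xy\in E(G)$ for some $x\in W_1$ and $y\in W_3$; by cyclic symmetry of the indices along $C$ this covers every pair $(W_i,W_{i+2})$. Since $xy\in E(G)$ and $G$ is triangle-free, $N(x)\cap N(y)=\emptyset$. The plan is to exhibit eight pairwise disjoint subsets of $V(G)$, each of cardinality greater than $\lfloor n/8\rfloor$, which yields $n\ge 8(\lfloor n/8\rfloor+1)>n$, the desired contradiction.

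The six ``backbone'' sets. Because $x\in W_1$ has no neighbor in $D_1\cup D_3\cup D_4$ and $y\in W_3$ has no neighbor in $D_1\cup D_3\cup D_5$, the non-edges $xv_3,xv_4,yv_1,yv_5$ all hold, so
\[
X_3=N(x)\cap N(v_3),\ X_4=N(x)\cap N(v_4),\ Y_1=N(y)\cap N(v_1),\ Y_5=N(y)\cap N(v_5)
\]
each have size greater than $\lfloor n/8\rfloor$. I will verify that $D_1,D_3,X_3,X_4,Y_1,Y_5$ are pairwise disjoint. Disjointness from $D_1$ and $D_3$ is immediate from $N(x),N(y)$ having no vertex in $D_1\cup D_3$ combined with the triangle-free exclusions $N(v_j)\cap D_i=\emptyset$ for the relevant $i,j$. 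Among the common-neighborhood sets, $X_3\cap X_4=\emptyset$ follows from Lemma \ref{lem-3} applied to the edge $v_3v_4$, $Y_1\cap Y_5=\emptyset$ from the edge $v_1v_5$, and every $X_i\cap Y_j=\emptyset$ from the edge $xy$ since $N(x)\cap N(y)=\emptyset$.

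Two more sets, via three cases. Introduce the further candidates $X_2=N(x)\cap N(v_2)$, $X_5=N(x)\cap N(v_5)$, $Y_2=N(y)\cap N(v_2)$, $Y_4=N(y)\cap N(v_4)$; each is well-defined and of size greater than $\lfloor n/8\rfloor$ exactly when the corresponding edge is absent. Three structural restrictions apply: (a) triangle-freeness combined with $xy\in E(G)$ prevents both $xv_2$ and $yv_2$ from being edges; (b) $x\notin D_1=N(v_2)\cap N(v_5)$ prevents both $xv_2$ and $xv_5$ from being edges; (c) $y\notin D_3=N(v_2)\cap N(v_4)$ prevents both $yv_2$ and $yv_4$ from being edges. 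Hence exactly one of the following three mutually exclusive cases occurs, and in each I take the indicated pair as the two additional sets: if $xv_2,yv_2\notin E(G)$, take $\{X_2,Y_2\}$; if $xv_2\in E(G)$, then (a) and (b) force $yv_2,xv_5\notin E(G)$ and I take $\{X_5,Y_2\}$; symmetrically, if $yv_2\in E(G)$, (a) and (c) force $xv_2,yv_4\notin E(G)$ and I take $\{X_2,Y_4\}$.

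The remaining task is to verify, in each of the three cases, that the two new sets are disjoint from each other and from the six backbone sets. The key tools are the identities $N(v_2)\cap N(v_4)=D_3$, $N(v_2)\cap N(v_5)=D_1$, $N(v_3)\cap N(v_5)=D_4$, $N(v_1)\cap N(v_4)=D_5$, which convert potentially large intersections such as $X_2\cap X_4$ or $Y_2\cap Y_5$ into sets of the form $N(x)\cap D_k$ or $N(y)\cap D_k$ with $k\in\{1,3,4,5\}$, hence empty by $W$-membership; in parallel, Lemma \ref{lem-3} applied to the cycle edges $v_\ell v_{\ell+1}$ handles the remaining ``adjacent'' pairs, and the uniform identity $N(x)\cap N(y)=\emptyset$ disposes of every cross-intersection $X_*\cap Y_*$. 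The main obstacle I anticipate is purely bookkeeping: although no new idea beyond Lemma \ref{lem-3} is needed, the disjointness checks differ slightly across the three cases and must be organized carefully. Conceptually the proof hinges on the observation that the three $W$-membership constraints (a)--(c) always leave at least two of $\{X_2,X_5,Y_2,Y_4\}$ available, which is exactly enough to complete the eight-set count.
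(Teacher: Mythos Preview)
Your proposal is correct and follows essentially the same approach as the paper: six backbone sets $D_1,D_3$ together with the four common-neighborhood sets $N(x)\cap N(v_3),\,N(x)\cap N(v_4),\,N(y)\cap N(v_1),\,N(y)\cap N(v_5)$, then two further sets drawn from $\{N(x)\cap N(v_2),\,N(x)\cap N(v_5),\,N(y)\cap N(v_2),\,N(y)\cap N(v_4)\}$ according to which of the edges $xv_2,xv_5,yv_2,yv_4$ are absent. Your case analysis is in fact slightly tighter than the paper's: you use three mutually exclusive cases keyed on the status of $xv_2$ and $yv_2$, whereas the paper runs through four overlapping cases (all combinations of one non-edge from $\{xv_2,xv_5\}$ and one from $\{yv_2,yv_4\}$); your observation (a) that $xv_2,yv_2$ cannot both be edges renders the paper's fourth case redundant.
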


\noindent\emph{\textbf{Proof}.} Suppose for contradiction that $xy$ is an edge with $x\in W_1$ and $y\in W_3$.  Let $x_{2}\in N(x)\cap D_{2}$, $x_{5}\in N(x)\cap D_{5}$,  $y_{2}\in N(y)\cap D_{2}$, $y_{4}\in N(y)\cap D_{4}$. Since $x\in W_1$, $v_{3}x,v_{4}x\notin E(G)$. Let $X_{2}=N(x)\cap N(v_{3})$ and  $X_{5}=N(x)\cap N(v_{4})$. Clearly $x_2\in X_2$ and $x_5\in X_5$. Since $y\in W_3$, $v_{1}y,v_{5}y\notin E(G)$. Let $Y_{2}=N(y)\cap N(v_{1})$ and $Y_{4}=N(y)\cap N(v_{5})$. Clearly $y_2\in Y_2$ and $y_4\in Y_4$.

Since $x\in W_1$ and $y\in W_3$, by the definitions of $W_1$ and $W_3$ we infer that
 \[
 X_i \cap (D_1\cup D_3)=\emptyset=Y_j\cap (D_1\cup D_3),\ i=2,5, \ j=2,4.
 \] Thus $(X_2\cup X_5\cup Y_2\cup Y_4)\cap (D_1\cup D_3)=\emptyset$.

Applying Lemma \ref{lem-3} with $v_3v_4$ and $v_1v_5$, we know $X_2\cap X_5=\emptyset=Y_2\cap Y_4$. Applying Lemma \ref{lem-3} with $xy$,  $X_i\cap Y_j=\emptyset$ for $i=2,5$ and $j=2,4$. Thus $X_2, X_5, Y_2, Y_4,D_1, D_3$ are pairwise disjoint.

 Since $x,y \in W$, at most one of $xv_2$ and $xv_5$ is an edge of $G$ and at most one of $yv_2$ and $yv_4$ is an edge of $G$. We distinguish four cases.

 {\bf Case 1. } $xv_2,yv_2\notin E(G)$.

 \begin{figure}[H]
	\centering
	\begin{tabular}{@{\extracolsep{\fill}}c@{}c@{\extracolsep{\fill}}}
		\includegraphics[width=0.4\linewidth]{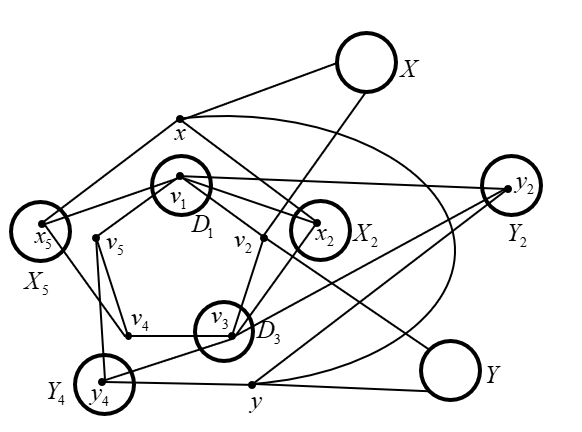}
	\end{tabular}
	\caption{$G$ in Case 1 of Claim \ref{claim-4} }
	\label{fig:image_with_table}
\end{figure}
Let $X=N(x)\cap N(v_{2})$ and $Y=N(y)\cap N(v_{2})$. Since $x\in W_1$, $X\cap D_1=\emptyset =X\cap D_3$. Since $y\in W_3$, $Y\cap D_1=\emptyset =Y\cap D_3$. Since $xy\in E(G)$, $X\cap Y=\emptyset$, $X\cap Y_j=\emptyset$ for $j=2,4$ and $X_i\cap Y=\emptyset$ for $i=2,5$. Thus $X\cap (Y\cup Y_2\cup Y_4\cup D_1\cup D_3)=\emptyset$ and $Y\cap (X\cup X_2\cup X_5\cup D_1\cup D_3)=\emptyset$.

Since $v_2v_3\in E(G)$, $X\cap X_2=\emptyset$.
If there exists $u\in X\cap X_5$ then $u\in D_3$, contradicting the fact that $N(x)\cap D_3=\emptyset$. Thus $X\cap X_5=\emptyset$. Since $v_1v_2\in E(G)$, $Y\cap Y_2=\emptyset$. If there exists $u\in Y\cap Y_4$ then $u\in D_1$, contradicting the fact that $N(y)\cap D_1=\emptyset$. Thus $Y\cap Y_4=\emptyset$. Therefore, $X,Y, X_2, X_5, Y_2, Y_4,D_1, D_3$ are pairwise disjoint (as shown in Figure 6). Then
\[
n=|V(G)|\geq 8\cdot\left(\left\lfloor\frac{n}{8}\right\rfloor+1\right) >n,
\]
a contradiction.

 {\bf Case 2. } $xv_5,yv_2\notin E(G)$.

  \begin{figure}[H]
	\centering
	\begin{tabular}{@{\extracolsep{\fill}}c@{}c@{\extracolsep{\fill}}}
		\includegraphics[width=0.4\linewidth]{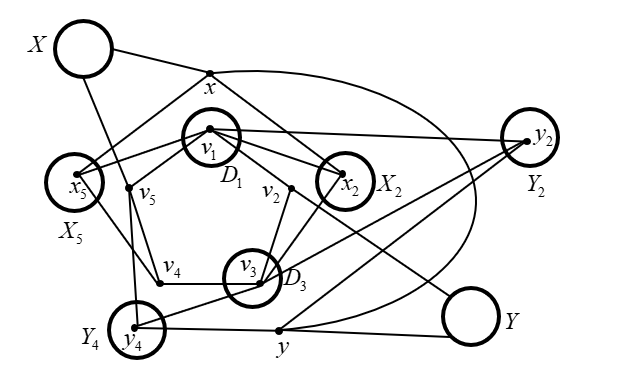}
	\end{tabular}
	\caption{$G$ in Case 2 of Claim \ref{claim-4} }
	\label{fig:image_with_table}
\end{figure}
Let $X=N(x)\cap N(v_{5})$ and $Y=N(y)\cap N(v_{2})$. Since $x\in W_1$, $X\cap D_1=\emptyset =X\cap D_3$. Since $y\in W_3$, $Y\cap D_1=\emptyset =Y\cap D_3$. Since $xy\in E(G)$, $X\cap Y=\emptyset$, $X\cap Y_j=\emptyset$ for $j=2,4$ and $X_i\cap Y=\emptyset$ for $i=2,5$. Thus $X\cap (Y\cup Y_2\cup Y_4\cup D_1\cup D_3)=\emptyset$ and $Y\cap (X\cup X_2\cup X_5\cup D_1\cup D_3)=\emptyset$.

Since $v_4v_5\in E(G)$, $X\cap X_5=\emptyset$.
If there exists $u\in X\cap X_2$ then $u\in D_4$, contradicting the fact that $N(x)\cap D_4=\emptyset$. Thus $X\cap X_2=\emptyset$. Since $v_1v_2\in E(G)$, $Y\cap Y_2=\emptyset$. If there exists $u\in Y\cap Y_4$ then $u\in D_1$, contradicting the fact that $N(y)\cap D_1=\emptyset$. Thus $Y\cap Y_4=\emptyset$. Therefore, $X,Y, X_2, X_5, Y_2, Y_4,D_1, D_3$ are pairwise disjoint (as shown in Figure 7). Then
\[
n=|V(G)|\geq 8\cdot\left(\left\lfloor\frac{n}{8}\right\rfloor+1\right) >n,
\]
a contradiction.

 {\bf Case 3. } $xv_2,yv_4\notin E(G)$.

  \begin{figure}[H]
	\centering
	\begin{tabular}{@{\extracolsep{\fill}}c@{}c@{\extracolsep{\fill}}}
		\includegraphics[width=0.4\linewidth]{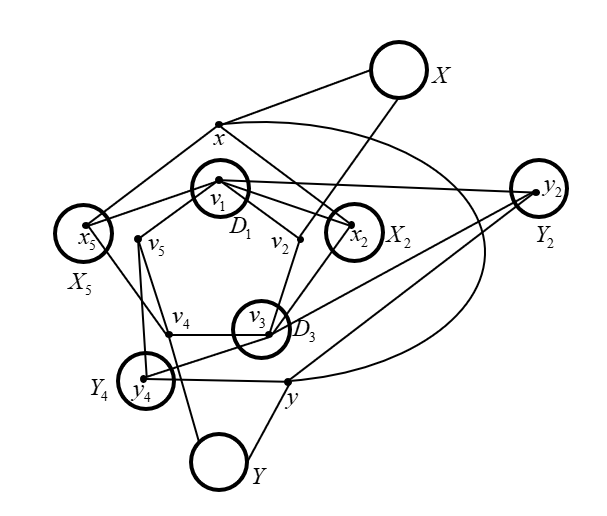}
	\end{tabular}
	\caption{$G$ in Case 3 of Claim \ref{claim-4} }
	\label{fig:image_with_table}
\end{figure}

Let $X=N(x)\cap N(v_{2})$ and $Y=N(y)\cap N(v_{4})$. Since $x\in W_1$, $X\cap D_1=\emptyset =X\cap D_3$. Since $y\in W_3$, $Y\cap D_1=\emptyset =Y\cap D_3$. Since $xy\in E(G)$, $X\cap Y=\emptyset$, $X\cap Y_j=\emptyset$ for $j=2,4$ and $X_i\cap Y=\emptyset$ for $i=2,5$. Thus $X\cap (Y\cup Y_2\cup Y_4\cup D_1\cup D_3)=\emptyset$ and $Y\cap (X\cup X_2\cup X_5\cup D_1\cup D_3)=\emptyset$.

Since $v_2v_3\in E(G)$, $X\cap X_2=\emptyset$.
If there exists $u\in X\cap X_5$ then $u\in D_3$, contradicting the fact that $N(x)\cap D_3=\emptyset$. Thus $X\cap X_5=\emptyset$. Since $v_4v_5\in E(G)$, $Y\cap Y_4=\emptyset$. If there exists $u\in Y\cap Y_2$ then $u\in D_5$, contradicting the fact that $N(y)\cap D_5=\emptyset$. Thus $Y\cap Y_2=\emptyset$. Therefore, $X,Y, X_2, X_5, Y_2, Y_4,D_1, D_3$ are pairwise disjoint (as shown in Figure 8). Then
\[
n=|V(G)|\geq 8\cdot\left(\left\lfloor\frac{n}{8}\right\rfloor+1\right) >n,
\]
a contradiction.

 {\bf Case 4. } $xv_5,yv_4\notin E(G)$.

  \begin{figure}[H]
	\centering
	\begin{tabular}{@{\extracolsep{\fill}}c@{}c@{\extracolsep{\fill}}}
		\includegraphics[width=0.4\linewidth]{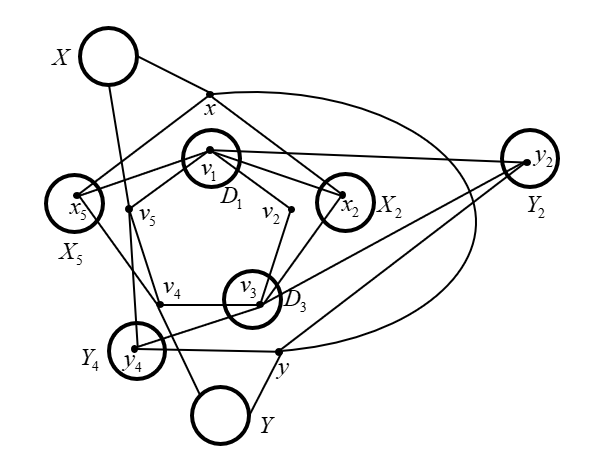}
	\end{tabular}
	\caption{$G$ in Case 4 of Claim \ref{claim-4} }
	\label{fig:image_with_table}
\end{figure}

Let $X=N(x)\cap N(v_{5})$ and $Y=N(y)\cap N(v_{4})$. Since $x\in W_1$, $X\cap D_1=\emptyset =X\cap D_3$. Since $y\in W_3$, $Y\cap D_1=\emptyset =Y\cap D_3$. Since $xy\in E(G)$, $X\cap Y=\emptyset$, $X\cap Y_j=\emptyset$ for $j=2,4$ and $X_i\cap Y=\emptyset$ for $i=2,5$. Thus $X\cap (Y\cup Y_2\cup Y_4\cup D_1\cup D_3)=\emptyset$ and $Y\cap (X\cup X_2\cup X_5\cup D_1\cup D_3)=\emptyset$.

Since $v_4v_5\in E(G)$, $X\cap X_5=\emptyset$.
If there exists $u\in X\cap X_2$ then $u\in D_4$, contradicting the fact that $N(x)\cap D_4=\emptyset$. Thus $X\cap X_2=\emptyset$. Since $v_4v_5\in E(G)$, $Y\cap Y_4=\emptyset$. If there exists $u\in Y\cap Y_2$ then $u\in D_5$, contradicting the fact that $N(y)\cap D_5=\emptyset$. Thus $Y\cap Y_2=\emptyset$. Therefore, $X,Y, X_2, X_5, Y_2, Y_4,D_1, D_3$ are pairwise disjoint (as shown in Figure 9). Then
\[
n=|V(G)|\geq 8\cdot\left(\left\lfloor\frac{n}{8}\right\rfloor+1\right) >n,
\]
a contradiction.
$\hfill\square$

Thus $G$ is a blow-up of $C_5$ with blocks $D_{1}\cup W_{1},D_{2}\cup W_{2},\ldots,D_{5}\cup W_{5}$.
$\hfill\square$

\begin{rem}
Note that we find 8 pairwise disjoint blocks of sizes at least $\delta_{2}(G)$ in several cases in the proof of Theorem \ref{main} (ii). However, only in Claim \ref{claim-2} the  8 blocks cover all the vertices of $G$.
Thus if $G$ is a triangle\text-free graph on $n$ vertices and $\delta_{2}(G)=\frac{n}{8}$ ($n$ is divisible by 8), then $G$ is a balanced M\"{o}bius ladder-blow-up.
\end{rem}

\noindent\emph{\textbf{Proof of Theorem \ref{C5}}.} Let $G$ be a $C_{5}$-free graph on $n$ vertices with $\delta_{2}(G)\geq 3$. By Lemma \ref{lem-2}, either $G$ is bipartite or contains a triangle. In the former case, there is nothing to prove. Thus we may assume that $G$ contains a triangle $v_1v_2v_3v_1$.

For any vertex $x\in V(G)\setminus \{v_1,v_2,v_3\}$, we claim $|N(x)\cap \{v_1,v_2,v_3\}|\geq 2$. Otherwise, by symmetry assume that $xv_1,xv_2\notin E(G)$. Then $\delta_2(G)\geq 3$ implies $|N(x)\cap N(v_{1})|\geq 3$ and $|N(x)\cap N(v_{2})|\geq 3$. It follows that there exist distinct vertices $y,z\in V(G)\setminus \{v_1,v_2,v_3,x\}$ such that $y\in N(x)\cap N(v_{1})$ and $z\in N(x)\cap N(v_{2})$. Then $xyv_1v_2zx$ forms a $C_5$, a contradiction. Thus $|N(x)\cap \{v_1,v_2,v_3\}|\geq 2$ for all $x\in V(G)\setminus \{v_1,v_2,v_3\}$.

Now we distinguish two cases.

\textbf{Case 1.} There exists $x\in V(G)\setminus \{v_1,v_2,v_3\}$ such that $|N(x)\cap \{v_1,v_2,v_3\}|=2$. Without loss of generality, assume  $xv_1,xv_2\in E(G)$. Then $\delta_2(G)\geq 3$ implies  $|N(x)\cap N(v_{3})|\geq 3$. Note that $v_{1},v_{2}$ are in $N(x)\cap N(v_{3})$. Since $|N(x)\cap N(v_{3})|\geq 3$, there exists $u\in V(G)\setminus\{v_1,v_2,v_3,x\}$ such that $u\in N(x)\cap N(v_{3})$. Therefore $v_{1}v_{2}v_{3}uxv_{1}$ is a $C_5$, a contradiction.

\textbf{Case 2.} $|N(x)\cap \{v_1,v_2,v_3\}|=3$ for all $x\in V(G)\setminus \{v_1,v_2,v_3\}$. Since $n\geq 5$, there exists two vertices $x,y\in V(G)\setminus \{v_1,v_2,v_3\}$ such that $x,y$ are both connected to $v_{1},v_{2},v_{3}$. Therefore $v_{1}xv_{2}yv_{3}v_{1}$ is a $C_5$, a contradiction.

Thus we conclude that $G$ is bipartite.
$\hfill\square$

\begin{rem}
If $G$ is a $C_{5}$-free graph on 3 vertices and $\delta_{2}(G)\geq 3$, then $G$ is a triangle. If $G$ is a $C_{5}$-free graph on 4 vertices and $\delta_{2}(G)\geq 3$, then $G$ is a $K_{4}$.
\end{rem}

\end{document}